\newtheorem{thm}{Theorem}
\newtheorem{lem}[thm]{Lemma}
\newtheorem{prop}[thm]{Proposition}
\newtheorem*{thm*}{Theorem}
\theoremstyle{definition}
\newtheorem{defn}[thm]{Definition}
\theoremstyle{remark}
\DeclareMathOperator{\supp}{supp}
\DeclareMathOperator{\diam}{diam}
\DeclareMathOperator{\dist}{dist}
\DeclareMathOperator{\crit}{crit}
\DeclareMathOperator{\lip}{Lip}
\DeclareMathOperator{\acc}{acc}
\DeclareMathOperator{\essacc}{ess\,acc}
\DeclareMathOperator{\arclength}{arc\,length}
\DeclareMathOperator*{\argmin}{arg\,min}
\newcommand{\meas}[2]{\mu_{#1|_{#2}}}
\newcommand{\R}{\mathbb{R}}
\newcommand{\N}{\mathbb{N}}
\newcommand{\Z}{\mathbb{Z}}
\newcommand{\lebesgue}{\mathsf{Leb}}
\renewcommand{\geq}{\geqslant}
\renewcommand{\leq}{\leqslant}
\title{Examples of pathological dynamics of the subgradient method for Lipschitz path-differentiable functions}
\author{
 Rodolfo R\'ios-Zertuche
}
\begin{document}
\maketitle
\begin{abstract}
 We show that the vanishing stepsize subgradient method ---widely adopted for machine learning applications--- can display rather messy behavior even in the presence of favorable assumptions. 
 
 We establish that convergence of bounded subgradient sequences may fail even with a Whitney stratifiable objective function satisfying the Kurdyka-\L{}ojasiewicz inequality. 
 
 Moreover, when the objective function is path-differentiable we show that various properties all may fail to occur: criticality of the limit points, convergence of the sequence, convergence in values, codimension one of the accumulation set, equality of the accumulation and essential accumulation sets, connectedness of the essential accumulation set, spontaneous slowdown, oscillation compensation, and oscillation perpendicularity to the accumulation set.
\end{abstract}

\section{Introduction}
\label{sec:intro}

In our previous work \cite{ouroscillationcompensation}, we investigate the vanishing-step subgradient method applied to a nonsmooth, nonconvex objective function $f$ in the hope of finding
\[\argmin_{x\in\R^n}f(x).\]
This paper is intended as a companion to \cite{ouroscillationcompensation}, as it presents two examples that show that the results obtained there are sharp in several senses. We also aim here to provide insight into the types of dynamics that the subgradient algorithm presents in the asymptotic limit, and we evaluate some of the ideas that are believed to show promise towards a proof of convergence of the algorithm, such as the Kurdyka--\L{}ojasiewicz inequality. We refer the reader to \cite{ouroscillationcompensation} for some discussion of the historical background. 

\medskip
We shall now give some definitions that will allow us to discuss our results.

For a locally Lipschitz function $f\colon\R^n\to\R$, we denote by $\partial^cf(x)$ the \emph{Clarke subdifferential} of $f$ at $x\in\R^n$, that is, the convex envelope of the set of vectors $v\in\R^n$ such that there is a sequence $\{y_i\}_i\subset\R^n$ such that $f$ is differentiable at $y_i$, $y_i\to x$ and $\nabla f(y_i)\to v$.

\begin{defn}[Small-step subgradient method]\label{def:subgradientmethod}
 Let $f\colon\R^n\to\R$ be a locally Lipschitz function, and $\{\varepsilon_i\}_i$ be a sequence of positive step sizes such that 
 \[\sum_{i=0}^\infty\varepsilon_i=+\infty\qquad\textrm{and}\qquad \lim_{i\to+\infty}\varepsilon_i=0.\]
 Given $x_0\in\R^n$, consider the recursion, for $i\geq0$,
 \[x_{i+1}=x_i-\varepsilon_iv_i,\qquad v_i\in\partial^cf(x_i).\]
 Here, $v_i$ is chosen freely among $\partial^cf(x_i)$. The sequence $\{x_i\}_{i\in\N}$ is called a \emph{subgradient sequence}. %
\end{defn}

Since the dynamics of the subgradient method in the case of $f$ locally Lipschitz had been shown \cite{daniilidisdrusvyatskiy,borweinetal} to be too unwieldy, in \cite{ouroscillationcompensation} we instead discuss the dynamics of the subgradient method for $f$ path-differentiable.

\begin{defn}[Path-differentiable functions]\label{def:pathdiff}
 A locally Lipschitz function $f\colon\R^n\to\R$ is \emph{path-dif\-fer\-en\-tia\-ble} if for each Lipschitz\footnote{In other parts of the literature (see e.g. \cite{boltepauwels}), this definition is given with absolutely-continuous curves, and this is equivalent because such curves can be reparameterized (for example, by arclength) to obtain Lipschitz curves, without affecting their role in the definition.} curve $\gamma\colon\R\to\R^n$, for almost every $t\in\R$, the composition $f\circ\gamma$ is dif\-fer\-en\-tia\-ble at $t$ and the derivative is given by
 \[(f\circ\gamma)'(t)=v\cdot \gamma'(t)\]
 for all $v\in\partial^cf(\gamma(t))$.
\end{defn}
\begin{defn}[Weak Sard condition]\label{def:weaksard} 
 We will say that $f$ satisfies the \emph{weak Sard condition} if it is constant on each connected component of its critical set $\crit f=\{x\in\R^n:0\in\partial^c f(x)\}$.
\end{defn}

Recall that the \emph{accumulation set $\acc\{x_i\}_i$} of the sequence $\{x_i\}_i$ is the set of points $x\in\R^n$ such that, for every neighborhood $U$ of $x$, the intersection $U\cap\{x_i\}_i$ is an infinite set. Its elements are known as \emph{limit points}.

\begin{defn}[Essential accumulation set]\label{def:essacc}
 Given sequences $\{x_i\}_i\subset\R^n$ and $\{\varepsilon_i\}_i\subset\R_{\geq0}$, the \emph{essential accumulation set $\essacc\{x_i\}_i$} is the set of points $x\in\R^n$ such that, for every neighborhood $U$ of $x$,
 \begin{equation}\label{eq:essaccdef}
  \limsup_{N\to+\infty}\frac{\displaystyle\sum_{\substack{0\leq i\leq N\\ x_i\in U}}\varepsilon_i}{\displaystyle\sum_{0\leq i\leq N}\varepsilon_i}>0.\end{equation}
\end{defn}

\begin{defn}[Whitney stratifiable functions]\label{def:stratification}
 Let $X$ be a nonempty subset of $\R^m$ and $0<p\leq +\infty$. A \emph{$C^p$ stratification} $\mathcal X=\{X_i\}_{i\in I}$ of $X$ is a locally finite partition of $X=\bigsqcup_iX_i$ into connected submanifolds $X_i$ of $\R^m$ of class $C^p$ such that for each $i\neq j$
 \[\overline{X_i}\cap X_j\neq \emptyset \Longrightarrow X_j\subset \overline{X_i}\setminus X_i.\]
 A $C^p$ stratification $\mathcal X$ of $X$ \emph{satisfies Whitney's condition A} if, for each $x\in \overline{X_i}\cap X_j$, $i\neq j$, and for each sequence $\{x_k\}_k\subset X_i$ with $x_k\to x$ as $k\to+\infty$, and such that the sequence of tangent spaces $\{T_{x_k}X_i\}_k$ converges (in the usual metric topology of the Grassmanian) to a subspace $V\subset T_x\R^m$, we have that $T_xX_j\subset V$. A $C^p$ stratification is \emph{Whitney} if it satisfies Whitney's condition A.

 With the same notations as above,
 a function $f\colon \R^n\to\R^k$ is \emph{Whitney $C^p$-stratifiable} if there exists a Whitney $C^p$ stratification of its graph as a subset of $\R^{n+k}$. 
\end{defn}

\paragraph{Summary of the results.}

Let
\begin{itemize}
 \item $n>0$, 
 \item $f\colon\R^n\to\R$ be a locally Lipschitz, path-differentiable function, 
 \item the sequence $\{\varepsilon_{i}\}_i\subset\R_{>0}$ of step sizes satisfy $\lim_{i\to+\infty}\varepsilon_i=0$, and
 \item $\{x_i\}_i$ be a bounded subgradient sequence with stepsizes $\{\varepsilon_i\}_i$.
\end{itemize}
The main questions we address here are the following:

\begin{enumerate}[label=Q\arabic*.,ref=Q\arabic*,series=Q]
 \item\label{q:1}\label{q:prevfirst} \emph{Does the sequence $\{x_i\}_i$ converge  in general?} 
 
 While it is tempting to hope for the sequence to converge since we have proven \cite[Theorems 6(i),7(i),7(ii)]{ouroscillationcompensation} that the sequence slows down indefinitely, in Section \ref{sec:circle} we give an example in which the sequence forever accumulates around a circle and never converges. The function we construct satisfies the weak Sard condition, so even with that assumption there is no hope for the convergence of $\{x_i\}_i$. The function also satisfies the Kurdyka--\L{}ojasiewicz inequality; see \ref{q:9}. 
 
 In contrast, in can be proven \cite{bolte2010characterizations} that if $f$ satisfies the weak Sard condition and the Kurdyka--\L{}ojasiewicz inequality, then the flow lines $x\colon\R\to\R^n$ of the continuous-time subgradient flow, which satisfy
 \[-\dot x(t)\in\partial^cf(x(t)),\]
 always converge. Thus the example in Section \ref{sec:circle} shows that the convergence of the continuous-time process may not guarantee the convergence of the discrete subgradient sequence.
 
 \item\label{q:5} \emph{Do the values $\{f(x_i)\}_i$ converge for a general path-differentiable function $f$?} 
 
 Although this convergence can be proved when $f$ satisfies the weak Sard condition \cite[Theorem 7(v)]{ouroscillationcompensation}, the example in Section \ref{sec:sardcounterexample} shows that the convergence of the values $f(x_i)$ fails in general. In fact, in that example we have $f(\acc\{x_i\}_i)=[0,1]=f(\essacc\{x_i\}_i)$.

 \item\label{q:2} \emph{Must $\acc\{x_i\}_i$ be a subset of $\crit f$ in general?}
 
 The example in Section \ref{sec:sardcounterexample} shows that in general the set $\acc\{x_i\}_i\setminus\essacc\{x_i\}_i$ may not intersect $\crit f$. This contrasts with results that $\essacc\{x_i\}_i$ is always contained in $\crit f$ \cite[Theorem 6(iii)]{ouroscillationcompensation}, and that $\acc\{x_i\}_i$ is contained in $\crit f$ if $f$ satisfies the weak Sard condition \cite[Theorem 7(iv)]{ouroscillationcompensation}.
 
  \item\label{q:4} \emph{Do we always have $\essacc\{x_i\}_i=\acc\{x_i\}_i$?} 
 
 No, in the example in Section \ref{sec:sardcounterexample} we have a situation in which the set $\essacc\{x_i\}_i$ is strictly smaller than $\acc\{x_i\}_i$. We do not know the answer to this question with more stringent assumptions, such as $f$ satisfying the weak Sard condition.

 \item\label{q:6}\emph{Can the essential accumulation set $\essacc\{x_i\}_i$ be disconnected?} 
 
 Yes. Although for simplicity we do not construct an example here, the reader will surely understand that the example in Section \ref{sec:sardcounterexample} can be easily modified (by taking several copies of $\Gamma$ and joining them with curves having roles similar to the one played by $J$) to produce a situation in which $\essacc\{x_i\}_i$ is disconnected. This contrasts with the fact that $\acc\{x_i\}_i$ is always connected because $\dist(x_i,x_{i+1})\leq \varepsilon_i\lip(f)\to 0$ as $i\to+\infty$, where $\lip(f)$ is the Lipschitz constant for $f$ in a compact set that contains $\{x_i\}_i$.
 
 \item\label{q:3} \emph{A certain spontaneous slowdown phenomenon is proved in \cite[Theorem 6(i)]{ouroscillationcompensation} of the fragments of the subgradient sequence as (roughly speaking) it traverses the piece of $\acc\{x_i\}_i$ starting at a point $x$ and ending at another point $y$, such that $x,y\in\acc\{x_i\}$ verify $f(x)\leq f(y)$ (see the precise statement below). }
 
 \emph{Is there any hope of proving, for general $f$, 
 that this phenomenon always occurs uniformly throughout the accumulation set, regardless of the restriction $f(x)\leq f(y)$? }

 No, the example in Section \ref{sec:sardcounterexample} shows that the speed of drift of the sequence can remain high forever between points that do not satisfy this inequality.
 
 To be precise, the result in \cite[Theorem 6(i)]{ouroscillationcompensation} is this: Let $x$ and $y$ be two distinct points in $\acc\{x_i\}_i$ satisfy $f(x)\leq f(y)$, and take subsequences $\{x_{i_k}\}_k$ and $\{x_{i'_k}\}_k$ such that $x_{i_k}\to x$, $x_{i'_k}\to y$ as $k\to+\infty$, and $i'_k>i_k$ for all $k$. Then 
 \[\lim_{k\to+\infty}\sum_{p=i_k}^{i'_k}\varepsilon_p=+\infty.\]
 This is verified independently of the subsequences taken.
 
 On the other hand, the endpoints $x$ and $y$ of the curve $J$ in the example in Section \ref{sec:sardcounterexample} are contained in $\acc\{x_i\}_i$, satisfy $f(x)>f(y)$, and we can take subsequences $\{x_{i_k}\}_k$ and $\{x_{i'_k}\}_k$ converging to $x$ and $y$, respectively, and with $i'_k>i_k$,
 for which we additionally have
 \[\sup_{k}\sum_{p=i_k}^{i'_k}\varepsilon_p <+\infty.\]

 \item\label{q:7} \emph{Does the oscillation compensation phenomenon described in \cite[Theorem 6(ii)]{ouroscillationcompensation} occur on the entire accumulation set in general?} 
 
 While we are able to prove an oscillation compensation result \cite[Theorem 7(iii)]{ouroscillationcompensation} that holds throughout $\acc\{x_i\}_i$ with the assumption that $f$ satisfies the weak Sard condition, the example in Section \ref{sec:sardcounterexample} shows that in general, in the absence of the weak Sard condition, there need not be any oscillation compensation on $\acc\{x_i\}_i\setminus\essacc\{x_i\}_i$, which in the example corresponds to the curve $J$. For a precise statement, please refer to \ref{itex:osccomp} in Section \ref{sec:sardcounterexample}.
 
 \item\label{q:8} \emph{Can the perpendicularity of the oscillations of $\{x_i\}_i$ verified around $\essacc\{x_i\}_i$ \cite[Remark 9]{ouroscillationcompensation} be proved on the entire accumulation set?} 
 
 No, as is shown in the example of Section \ref{sec:sardcounterexample} this may fail on $\acc\{x_i\}_i\setminus\essacc\{x_i\}_i$ for general $f$. The perpendicularity can, however, be proved to happen on $\essacc\{x_i\}_i$ or, if $f$ satisfies the weak Sard condition, on all of $\acc\{x_i\}_i$; see \cite[Remark 9]{ouroscillationcompensation}. 
 
 \item\label{q:9} \emph{Would it be possible to prove the convergence of $\{x_i\}_i$ if $f$ is Whitney stratifiable (cf. Definition \ref{def:stratification}) and satisifes a Kurdyka--\L{}o\-ja\-sie\-wicz inequality?} 
 
 No; more assumptions are necessary. The objective function $f$ in the example in Section \ref{sec:circle} is Whitney $C^\infty$ stratifiable and satisfies a Kurdyka--\L{}ojasiewicz inequality of the form
 \[\|\nabla f(x)\|\geq \frac12\quad\textrm{for all $x\notin \crit f$,}\]
 but we also construct a bounded subgradient sequence that fails to converge. However, in the case of $f$ smooth, the Kurdyka--\L{}ojasiewicz inequality does suffice to prove convergence of the subgradient method \cite{attouchboltesvaiter}.
 \item\label{q:10}  Recall that the Hausdorff dimension of a set $X$ is 
 \[\dim X=\inf\{d\in\R:\mathcal H^d(X)=0\},       \]
 where $\mathcal H^d(X)$ is the $d$-dimensional Hausdorff outer measure,
 \begin{equation*}
   \mathcal H^d(X)\coloneqq\liminf_{r\to0}\{\textstyle\sum_ir_i^d:
   \textrm{there is a cover of $X$ by balls of radii $0<r_i<r$}\}.
 \end{equation*}
 \emph{Must the Hausdorff dimension of the accumulation set of $\{x_i\}_i$ be $\dim\acc\{x_i\}_i\leq n-1$?} 
 
 No, the example in Section \ref{sec:sardcounterexample} gives a function $f\colon\R^2\to\R$ and a subgradient sequence $\{x_i\}_i$ such that the Hausdorff dimension satisfies
 \begin{equation}\label{eq:fractaldimension}
  1<\dim\acc\{x_i\}_i=\dim\essacc\{x_i\}_i\leq \frac{\log 4}{\log 3}\approx 1.26,
 \end{equation}
 and actually depends on a parameter $\alpha$ that can be tweaked to produce any value of the Hausdorff dimension in this range; see Lemma \ref{lem:dimgamma}.
 Although the function $f$ in that example does not satisfy the weak Sard condition, the example can be easily modified (by changing the value of $f$ on $\Gamma\cup J$ to a constant) to satisfy also this condition and still have the dimension attain any value in the range \eqref{eq:fractaldimension}.
 
 This contrasts with the result \cite[Remark 10]{ouroscillationcompensation} that, if $f$ is Whitney $C^n$ stratifiable, then 
 \[\dim\acc\{x_i\}_i\leq n-1.\]
 
 \item\label{q:11}\label{q:prevlast}\emph{Can the set of limit closed measures of the interpolant curve be infinite?}
 
 Yes. This is the case in the situation of the example in Section \ref{sec:circle} (and also in the example of Section \ref{sec:sardcounterexample}, but for simplicity we will not prove it in that case). Please refer to Section \ref{sec:circlemeasures} for the full definitions and an explanation.
 
 \item\label{q:12}\emph{Would the answer to any of the previous questions \ref{q:prevfirst}--\ref{q:prevlast} be different if one enforced that
the sequence be contained in the (full measure) set of
differentiability points of the function $f$? }
 
  No, all our claims are based on
constructive existence proofs of subgradient sequences  $\{x_i\}_i$
such that each point $x_i$ is contained in a ball in which the objective function $f$ is $C^\infty$.
 \end{enumerate}

 \paragraph{Notation.}
 Given two sets $A$ and $B$, denote by $B^c$ the complement of $B$ and by $A\setminus B=A\cap B^c$.
 Let $n$ be a positive integer, and let $\R^n$ denote $n$-dimensional Euclidean space. %
 For two vectors $u=(u_1,\dots,u_n)$ and $v=(v_1,\dots,v_n)$ in $\R^n$, we let $u\cdot v=\sum_{i=1}^nu_iv_i$ and $\|u\|=\sqrt{u\cdot u}$. We will denote the gradient of $f$ at $x$ by $\nabla f(x)$. We denote $\log_ba=\log a/\log b$ the logarithm of $a$ in base $b$. We denote the unit circle by $S^1$, and the open ball of radius $r$ centered at $x$ by $B_r(x)$. A number with a subindex $b$ is in base $b$; for example, $0.12_9=1/9+2/81$. For a Lipschitz function $g\colon\R^n\to\R^m$, we denote by 
 \[\lip(g)=\sup_{x,y\in\R^n}\frac{\|g(x)-g(y)\|}{\|x-y\|}.\]

\section{Example on the circle}
\label{sec:circle}

 We construct a path-differentiable function $f\colon \R^2\to\R$ and a subgradient sequence $\{x_i\}_i$ that does not converge and instead accumulates around a circle. The function $f$ additionally has the property that it is Whitney $C^\infty$ stratifiable and satisfies a Kurdyka-\L{}ojasiewicz inequality. The construction is given in Section \ref{sec:circleconstruction} and the main properties are collected in Proposition \ref{prop:propertiesf}.
 
 In the context of the theory developed in \cite[\S4.2]{ouroscillationcompensation}, it is also interesting that the dynamics in this example induce, through the interpolant curve, infinitely-many limiting closed measures. This is discussed in Section \ref{sec:circlemeasures}.
 
 \subsection{Construction and main properties}\label{sec:circleconstruction}
 For $i\geq 2$, let (see Figure \ref{fig:example})
 \begin{figure}
  \centering
    \includegraphics[width=0.3\textwidth]{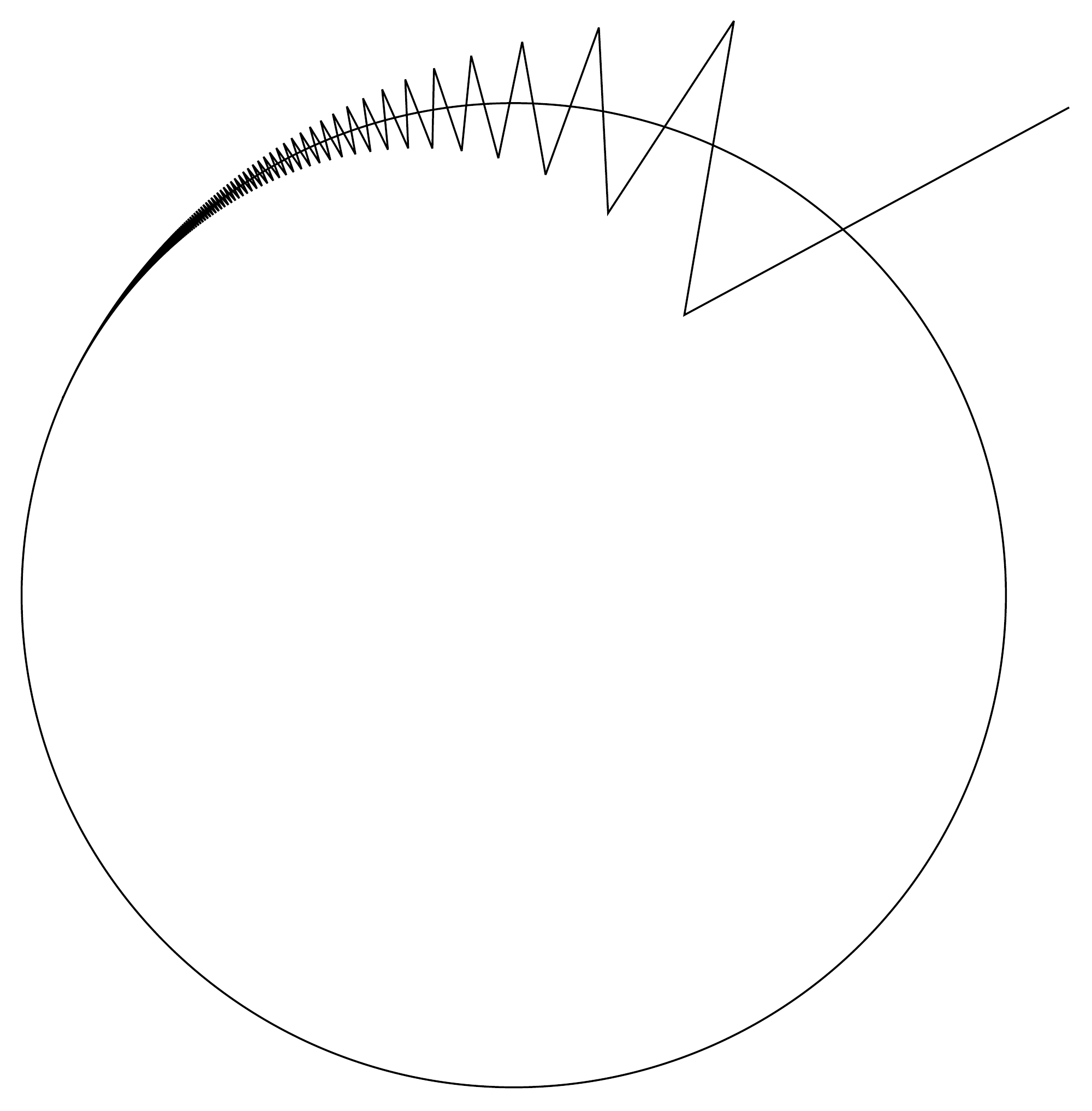}
  \caption{The unit circle and the path joining $\{x_i\}_i$ in the example of Section \ref{sec:circle}.}
  \label{fig:example}
\end{figure}
  \[x_i=[1+\tfrac{(-1)^i}i](\cos\vartheta_i,\sin\vartheta_i)\quad\textrm{with}\quad \vartheta_i=\sum_{k=2}^i\frac{1}{k\log k}\]
  and
  \[\varepsilon_i=\|x_{i+1}-x_i\|,\quad v_i=-\frac{x_{i+1}-x_i}{\|x_{i+1}-x_i\|},\]
so that $x_{i+1}=x_i-\varepsilon_iv_i$. Note that $\varepsilon_i$ satisfies, for large $i$,
\[%
 \frac{2}{i+1}<\varepsilon_i<\frac1i+\frac1{i+1}+\frac1{i\log i}< \frac2i,
\]
so that $\varepsilon_i\to0$, $\sum_i\varepsilon_i=+\infty$, and $\sum_i\varepsilon_i^2<+\infty$.

We want to obtain a function $f$ that is very close to the function $\phi$ given by the distance to the circle,
\[\phi(x)=  |1-\|x\||,\]
yet satisfies
 \begin{equation}\label{eq:wishes}
  \nabla f(x)= v_i\quad\textrm{for all $x\in B_{1/2^{i}}(x_i)$}.
 \end{equation}

 Let $\psi\colon\R^2\to[0,1]$ be a $C^\infty$ function with radial symmetry (i.e. $\psi(x)=\psi(y)$ for $\|x\|=\|y\|$), such that $\psi(x)=1$ for $x\in B_1(0)$, $\psi(x)=0$ for $\|x\|\geq 2$, and decreases monotonically on rays emanating from the origin. Let
 \[\psi_i(x)=\psi(2^i(x-x_i)),\]
 so that $\psi_i$ equals 1 on $B_{1/2^i}(x_i)$ and vanishes outside $B_{1/2^{i-1}}(x_i)$. %
 Note that the supports of the functions $\psi_i$ are pairwise disjoint.
 
 Define
 \[V_i(x)=(x-x_i)\cdot v_i+\frac1i.\]

 \begin{prop}\label{prop:propertiesf} Let $i_0\geq 2$ and
 \begin{equation}\label{eq:deff}
  f(x)=\left(1-\sum_{i=i_0}^\infty\psi_i(x)\right)\phi(x)+\sum_{i=i_0}^\infty\psi_i(x)V_i(x).
 \end{equation}
 Then we have:
  \begin{enumerate}[label=\roman*.,ref=(\roman*)]
   \item\label{f:regularity} The function $f$ is $C^\infty$ %
   on $\R^2\setminus S^1$. 
   \item\label{f:gradientsequence} The function $f$ satisfies \eqref{eq:wishes}, so that $\{x_i\}_i$ is a subgradient sequence with stepsizes $\{\varepsilon_i\}_i$.
   \item\label{f:clarke} Let $p$ be a point in the unit circle, then $\partial^c f(p)=\{ap:a\in[-1,1]\}=\partial^c\phi(p)$.
   \item\label{f:crit} The critical set of $f$ is $\crit f=S^1\cup \{0\}$.
   \item\label{f:pathdiff} The function $f$ is Lipschitz path-differentiable.
   \item\label{f:whitney} The function $f$ is Whitney $C^\infty$ stratifiable.
   \item\label{f:KL} If $i_0$ is large enough, $f$ satisfies a Kurdyka-\L{}ojasiewicz inequality of the form 
    \[\|\nabla f(x)\|>1/2\] 
    for $x\in\R^2\setminus \crit f$.
  \end{enumerate}
 \end{prop}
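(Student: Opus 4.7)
The plan is to treat the seven claims roughly in the order stated, front-loading the key asymptotic estimate for the subgradients $v_i$ since both the Clarke computation \ref{f:clarke} and the KL bound \ref{f:KL} depend on it. For \ref{f:regularity} I would observe that $\supp\psi_i\subset B_{1/2^{i-1}}(x_i)$, that these supports are pairwise disjoint (because $\|x_{i+1}-x_i\|$ is of order $1/i$, much larger than $1/2^{i-1}$), and that they accumulate only on $S^1$; hence the sum in \eqref{eq:deff} is locally finite off $S^1$ and each summand is smooth there. Part \ref{f:gradientsequence} is then immediate: on $B_{1/2^i}(x_i)$ one has $\psi_i\equiv 1$ and all other $\psi_j\equiv 0$, so $f\equiv V_i$ and $\nabla f=v_i$.

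The core estimate is the following. Writing $x_i=r_ie_{\theta_i}$ with $r_i=1+(-1)^i/i$ and $e_\theta=(\cos\theta,\sin\theta)$, the radial component of $x_{i+1}-x_i$ has magnitude of order $2/i$ while the tangential component has magnitude of order $1/(i\log i)$, so
\[v_i=(-1)^i e_{\theta_i}+O(1/\log i)\,e_{\theta_i}^\perp,\]
agreeing with $\nabla\phi(x_i)=(-1)^i e_{\theta_i}$ modulo a small tangential error. Since $V_i(x_i)=1/i=\phi(x_i)$ and $\nabla V_i=v_i$, Taylor expansion of $\phi$ at $x_i$ gives the critical bound
\[|V_i-\phi|=O\bigl(1/(2^i\log i)\bigr)\quad\text{on } B_{1/2^{i-1}}(x_i),\]
which tames the blow-up $\|\nabla\psi_i\|_\infty=O(2^i)$. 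For $p\in S^1$ and differentiability points $y_j\to p$ I then split into three cases: outside all bumps (where $\nabla f=\nabla\phi$), inside some inner ball $B_{1/2^i}(x_i)$ (where $\nabla f=v_i$), or in an annular region, where
\[\nabla f=(1-\psi_i)\nabla\phi+\psi_i v_i+\nabla\psi_i(V_i-\phi).\]
The estimates force every limit of $\nabla f(y_j)$ to lie in $\R p$; since the outside case produces both $\pm p$ (approached from $\|x\|>1$ and $\|x\|<1$), the convex hull yields $\partial^c f(p)=\{ap:a\in[-1,1]\}=\partial^c\phi(p)$, which is \ref{f:clarke}. The same casework shows $\|\nabla f\|\ne 0$ off $S^1\cup\{0\}$, and since $f\equiv\phi$ in a neighborhood of $0$ and $0\in\partial^c\phi(0)$, item \ref{f:crit} follows.

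For \ref{f:whitney}, taking $i_0\geq 3$ forces $1/2^{i-1}<1/i=\dist(x_i,S^1)$ for $i\geq i_0$, so no bump touches $S^1$ and $f\equiv\phi$ on a neighborhood of $S^1$; the graph of $f$ then admits the same Whitney $C^\infty$ stratification as that of $\phi$, namely the graphs of $\phi$ restricted to the open disk and to its exterior, together with $S^1\times\{0\}$ (Whitney A at the circle is checked directly from the explicit tangent planes of the two sheets). Path-differentiability \ref{f:pathdiff} then follows either from \ref{f:whitney} via the Bolte--Daniilidis--Lewis--Shiota chain rule, or more elementarily from the fact that $f$ is $C^\infty$ off the $C^\infty$ hypersurface $S^1$ with Clarke subdifferential there in the normal direction. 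For \ref{f:KL}, outside bumps and inside inner balls $\|\nabla f\|=1$; in an annular region the convex combination $(1-\psi_i)\nabla\phi+\psi_i v_i$ of two unit vectors separated by angle $O(1/\log i)$ has norm at least $1-O(1/\log^2 i)$, while the correction $\nabla\psi_i(V_i-\phi)$ is $O(1/\log i)$ by the core estimate, so taking $i_0$ large enough guarantees $\|\nabla f\|>1/2$ uniformly off $\crit f$.

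The main obstacle is the bookkeeping in the core estimate: the factor $2^i$ in $\|\nabla\psi_i\|$ must be precisely cancelled by the smallness of $V_i-\phi$, which relies on the specific choice $\theta_{i+1}-\theta_i=1/((i+1)\log(i+1))$ (so that tangential drift is strictly subradial) together with the designed matching $V_i(x_i)=\phi(x_i)$ and $\nabla V_i=v_i$. Once this estimate is secured, \ref{f:clarke}, \ref{f:crit}, and \ref{f:KL} all collapse onto the same three-case decomposition, and the remaining items \ref{f:regularity}, \ref{f:gradientsequence}, \ref{f:pathdiff}, \ref{f:whitney} are essentially consequences of the construction.
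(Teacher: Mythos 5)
Your overall strategy coincides with the paper's: the same key estimate $v_i=(-1)^i x_i/\|x_i\|+O(1/\log i)$, the same bound $|V_i-\phi|=O(2^{-i}/\log i)$ on $B_{1/2^{i-1}}(x_i)$ cancelling the factor $2^i$ coming from $\nabla\psi_i$, and the same three-case decomposition (outside all bumps, inner ball, annulus) driving items (iii), (iv) and (vii). Those parts are correct and match the paper's Lemma \ref{lem:estimates} and its use in the proof of Proposition \ref{prop:propertiesf}.

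There is, however, a genuine error in your justification of item (vi). You claim that for $i_0\geq 3$ ``no bump touches $S^1$ and $f\equiv\phi$ on a neighborhood of $S^1$.'' The first half is true, but the second does not follow and is false: the centers $x_i$ accumulate on \emph{every} point of $S^1$ (the angles $\vartheta_i$ increase to $+\infty$ with increments $1/i\log i\to 0$, hence are dense mod $2\pi$, while $\dist(x_i,S^1)=1/i\to0$), so every neighborhood of every point of $S^1$ meets infinitely many balls $B_{1/2^{i-1}}(x_i)$, on which $f\neq\phi$. (Indeed, if $f$ agreed with $\phi$ near $S^1$, items (iii), (v) and (vii) would be trivial and the example would collapse, since the subgradient sequence is designed to live inside those bumps as it approaches the circle.) You therefore cannot transfer the Whitney stratification of the graph of $\phi$ to that of $f$ by a locality argument. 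The stratification you propose (graph over the inside, graph over the outside, $S^1\times\{0\}$) is the right one, but Whitney's condition A at $(p,0)$, $p\in S^1$, must be checked for $f$ itself; this is exactly what item (iii) supplies: $\nabla f(y)\to\mp p$ as $y\to p$ from inside/outside, so any limit of tangent planes to the graph contains the direction $(p^{\perp},0)$ tangent to the circle stratum. This is how the paper deduces (vi) ``in view of items (i) and (iii).'' With that correction --- and noting that your ``more elementary'' route to (v) is the one the paper actually takes, supplemented by the observation that the set of times at which a Lipschitz curve meets $S^1$ non-tangentially is countable, hence null --- the rest of your argument goes through.
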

 To prove the proposition we need
 
\begin{lem}\label{lem:estimates}
 For $i$ large enough we have the estimates
  \begin{equation}\label{it:estimate3} \left\|v_i-(-1)^i\frac{x_i}{\|x_i\|}\right\|\leq \frac{6}{\log i}
  \end{equation}
  and, if $\dist(x_i,y)\leq 2^{1-i}$,
  \begin{equation}\label{it:estimate4}
      \left\|\frac{x_i}{\|x_i\|}-\frac{y}{\|y\|}\right\|\leq 3\dist(x_i,y).
  \end{equation}
\end{lem}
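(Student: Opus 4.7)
The plan is to prove both inequalities by direct Euclidean computation in the orthonormal frame $e_r=(\cos\vartheta_i,\sin\vartheta_i)$, $e_\theta=(-\sin\vartheta_i,\cos\vartheta_i)$ adapted to $x_i$. Since $r_i:=1+(-1)^i/i>0$ for $i\geq 2$, we have $x_i=r_ie_r$ and in particular $x_i/\|x_i\|=e_r$, so \eqref{it:estimate3} reduces to bounding $\|v_i-(-1)^ie_r\|$, and $v_i=(x_i-x_{i+1})/\|x_i-x_{i+1}\|$ can be written explicitly in the frame.

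For \eqref{it:estimate3}, setting $\delta:=\vartheta_{i+1}-\vartheta_i=1/((i+1)\log(i+1))$ and expanding $x_{i+1}=r_{i+1}(\cos\delta\, e_r+\sin\delta\, e_\theta)$ gives
\[x_i-x_{i+1}=Ae_r+Be_\theta,\qquad A:=r_i-r_{i+1}\cos\delta,\qquad B:=-r_{i+1}\sin\delta.\]
The leading part of $A$ is $r_i-r_{i+1}=(-1)^i(1/i+1/(i+1))$, of order $2/i$, and the cosine correction $-2r_{i+1}\sin^2(\delta/2)$ is of order $\delta^2$; thus for $i$ large enough $A$ has sign $(-1)^i$ and satisfies $|A|\geq 1/(i+1)$, while $|B|\leq r_{i+1}\delta\leq 2/((i+1)\log(i+1))$. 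Writing $v_i=(Ae_r+Be_\theta)/\sqrt{A^2+B^2}$, the inequality $\sqrt{1+t}\leq 1+t/2$ yields $1-|A|/\sqrt{A^2+B^2}\leq B^2/(2A^2)$, so
\[\|v_i-(-1)^ie_r\|^2=\bigl(1-|A|/\sqrt{A^2+B^2}\bigr)^2+B^2/(A^2+B^2)\leq 2B^2/A^2,\]
and $|B|/|A|\leq 2/\log(i+1)$ delivers $\|v_i-(-1)^ie_r\|\leq 2\sqrt{2}/\log(i+1)$, which is below $6/\log i$ for large $i$.

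For \eqref{it:estimate4}, the argument is the standard bound for differences of unit vectors: the identity
\[\frac{x_i}{\|x_i\|}-\frac{y}{\|y\|}=\frac{(x_i-y)\|y\|+y(\|y\|-\|x_i\|)}{\|x_i\|\,\|y\|},\]
together with $\bigl|\|y\|-\|x_i\|\bigr|\leq \|x_i-y\|$, bounds the left-hand side by $2\|x_i-y\|/\max(\|x_i\|,\|y\|)$. Since $\|x_i\|=r_i\geq 2/3$ for $i\geq 3$, this gives the factor $3$ claimed.

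The only delicate point is verifying that the $O(\delta^2)$ correction coming from $\cos\delta$ does not swamp the leading $(-1)^i(1/i+1/(i+1))$ term in $A$; this is immediate from $\delta^2=O(1/(i\log i)^2)=o(1/i)$. Everything else is trigonometric bookkeeping.
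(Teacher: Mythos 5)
Your proof is correct and follows essentially the same route as the paper's: both decompose $x_i-x_{i+1}$ into radial and tangential components (your $A$ and $B$ are, up to sign, the paper's dot products with $x_i/\|x_i\|$ and $x_i^\perp/\|x_i\|$), bound the ratio of tangential to radial parts by $O(1/\log i)$, and derive the second estimate from the same algebraic identity for differences of unit vectors. (One harmless slip: the cosine correction in $A$ is $+2r_{i+1}\sin^2(\delta/2)$, not $-2r_{i+1}\sin^2(\delta/2)$, but since you only use that it is $O(\delta^2)=o(1/i)$ the argument is unaffected.)
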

\begin{proof}
 To show \eqref{it:estimate3}, first observe that, in the definition of $x_i$, the jump in the direction tangential to the circle has magnitude $\vartheta_{i}-\vartheta_{i-1}=1/i\log i$, while the jump in the direction normal to the circle has magnitude $\frac1i+\frac{1}{i+1}$. It follows that
 \begin{gather*} 
  \frac{1}{2i\log i}\leq (x_{i+1}-x_i)\cdot \frac{x_i^\perp}{\|x_i\|}\leq \frac{2}{i\log i},\\
  \frac{2}{i+1}\Big(1-\frac{1}{\log i}\Big)\leq \frac{2}{i+1}\sqrt{1-\frac{1}{\log^2i}}\leq (x_{i+1}-x_i)\cdot\frac{x_i}{\|x_i\|}\leq \|x_{i+1}-x_i\|,
 \end{gather*}   
 where $(a,b)^\perp=(-b,a)$ and we have used the Cauchy--Schwarz inequality.
 Since $1/i\leq\varepsilon_i=\|x_{i+1}-x_i\|\leq 2/i$, together with $v_i=-(x_{i+1}-x_i)/\varepsilon_i$ and the estimates above, we also have
 \begin{equation}\label{it:estimate2} 
   \frac{1}{2\log i}\leq\left|v_i\cdot \frac{x_i^\perp}{\|x_i\|}\right|\leq \frac{2}{\log i},\end{equation}   
  and
 \begin{equation}\label{it:estimate1} 
    \frac{i}{i+1}\left(1-\frac2{\log i}\right)\leq  \left|v_i\cdot\frac{x_i}{\|x_i\|}\right|\leq 1.
  \end{equation}

 The estimate \eqref{it:estimate3} follows from \eqref{it:estimate2} and \eqref{it:estimate1}:
 \begin{align*}
  \left\|v_i-(-1)^i\frac{x_i}{\|x_i\|}\right\|
  &=\sqrt{\left(v_i\cdot \frac{x_i}{\|x_i\|}-1\right)^2+\left(v_i\cdot \frac{x^\perp_i}{\|x_i\|}\right)^2}\\
  &\leq \sqrt{\left(\frac{i}{i+1}\left(\frac{2}{\log i}+1\right)-1\right)^2+\left(\frac{2}{\log i}\right)^2}\\
  &\leq \frac{4}{\log i}+\frac2i\leq \frac{6}{\log i}.
 \end{align*}
 
 Estimate \eqref{it:estimate4} can be deduced by letting $w=y-x_i$, so that $\|w\|=\dist(x,y)$ and observing that
 \[1-\tfrac2i\leq\|x_i\|\leq 1+\tfrac2i\quad\textrm{and}\quad \|x_i+w\|=\|y\|\geq 1-\tfrac2i,\]
 which means that, for $i$ large, we have
 \begin{align*}
  \left\|\frac{x_i}{\|x_i\|}-\frac{y}{\|y\|}\right\|
  &=  \left\|\frac{x_i}{\|x_i\|}-\frac{x_i+w}{\|x_i+w\|}\right\|\\
  &=\frac{\big\|\,x_i(\|x_i+w\|-\|x_i\|)+w\|x_i\|\,\big\|}{\|x_i\|\,\|x_i+w\|}\\
  &\leq \frac{2\|x_i\|\,\|w\|}{\|x_i\|\,\|x_i+w\|}\\
  &\leq 2\frac{1+\frac2i}{(1-\frac2i)^2}\|w\|\\
  &\leq 3\|w\|.\qedhere
 \end{align*}
\end{proof}

\begin{proof}[Proof of Proposition \ref{prop:propertiesf}]
 Item \ref{f:regularity} becomes evident once we realize that the sum \eqref{eq:deff} reduces to $f(x)=(1-\psi_i(x))\phi(x)+\psi_i(x)V_i(x)$ for $x$ in $B_{1/2^{i-1}}(x_i)$ and to $f(x)=\phi(x)$ elsewhere, since $\psi_i$, $V_i$ and $\phi$ are $C^\infty$ on $\R^2\setminus (S^1\cup\{0\})$. 
 
 To prove item \ref{f:gradientsequence}, note that, for $x\in B_{1/2^i}(x_i)$, we have $f(x)=V_i(x)$ and $\nabla f(x)=\nabla V_i(x)=v_i$ so that $x_{i+1}-x_i=-\varepsilon_iv_i=-\varepsilon_i\nabla f(x_i)$.
 
 In order to prove item \ref{f:clarke}, let $p\in S^1$. Let us first show that, as $y\in\R^2$ with $\|y\|<1$ tends  $p$, $\nabla f(y)\to -p$. If $y\notin \bigcup_i B_{1/2^{i-1}}(x_i)$ is near $p$, then 
 \[\|\nabla f(y)+p\|=\|\nabla \phi(y)+p\|=\left\|-\frac{y}{\|y\|}+p\right\|,\]
 which clearly tends to 0 as $y\to p$. 
 If $y\in B_{1/2^{i-1}}(x_i)$ (and since $\|y\|<1$ we must have $i$ odd), then
 we have, by a Taylor expansion, $\nabla\phi(x_i)=-x_i/\|x_i\|$, the Cauchy--Schwarz inequality, and \eqref{it:estimate3}, 
 \begin{align*}
  |V_i(y)-\phi(y)|&=\left|(y-x_i)\cdot v_i+\tfrac1i-\phi(x_i)-\nabla\phi(x_i)\cdot(y-x_i)\right|+2\|y-x_i\|^2\\
  &=\left|(y-x_i)\cdot (v_i+\frac{x_i}{\|x_i\|})+\tfrac1i-\tfrac1i\right|+2\|y-x_i\|^2\\
  &\leq 2\|y-x_i\|\left\|v_i+\frac{x_i}{\|x_i\|}\right\|+2\left(\frac{1}{2^{i-1}}\right)^2\\
  &\leq 2\frac1{2^{i-1}}\frac{6}{\log i}=\frac{12}{2^{i-1}\log i}
 \end{align*}
 and, since also $\nabla \phi(y)=-y/\|y\|$, $\nabla V_i(y)=v_i$, $\lip(\nabla\psi_i)=2^i\lip(\nabla\psi)$, $|\psi_i(y)|\leq 1$, the triangle inequality, the estimates from Lemma \ref{lem:estimates}, and $y\in B_{1/2^{i-1}}(x_i)$,
 \begin{align*}
  \left\|\nabla f(y)+\frac{y}{\|y\|}\right\|&=\left\|\nabla[(1-\psi_i(y))\phi(y)+\psi_i(y)V_i(y)]+\frac{y}{\|y\|}\right\|\\
  &=\left\|\nabla \psi_i(y)(V_i(y)-\phi(y))+\psi_i(y)\left(\nabla V_i(y)+\frac{y}{\|y\|}\right)\right\| \\
  &\leq \lip(\nabla \psi_i)|V_i(y)-\phi(y)|+\left\|v_i+\frac{y}{\|y\|}\right\| \\
  &\leq 2^i\lip(\nabla \psi)|V_i(y)-\phi(y)|+\left\|v_i+\frac{x_i}{\|x_i\|}\right\|+\left\|\frac{x_i}{\|x_i\|}-\frac{y}{\|y\|}\right\| \\
  &\leq 2^i\lip(\nabla\psi)\frac{12}{2^{i-1}\log i}+\frac{6}{\log i}+\frac3{2^{i-1}}\\
  &=(12\lip(\nabla\psi)+6)\frac{2}{\log i} +\frac3{2^{i-1}}\to 0\quad\textrm{as $i\to +\infty$.}
 \end{align*}
 It follows from the triangle inequality that
 \[\left\|\nabla f(y)+p\right\|\leq \left\|\nabla f(y)+\frac{y}{\|y\|}\right\|+\left\|p-\frac{y}{\|y\|}\right\|\]
 so that, as $y\to p$ with $\|y\|<1$, we have $\nabla f(y)\to -p$.
 A similar argument yields that, as $y\to p$ with $\|y\|>1$, we have $\nabla f(y)\to p$, which proves item \ref{f:clarke}.

 To prove item \ref{f:pathdiff}, note that, by items \ref{f:regularity} and  \ref{f:clarke}, if a Lipschitz curve $\gamma$ satisfies either $\gamma(t)\in S^1$ and $\gamma'(t)$ tangent to $S^1$ or $\gamma(t)\in\R^2\setminus S^1$, then indeed we have $(f\circ\gamma)'(t)=v\circ\gamma'(t)$ for all $v\in\partial^cf(\gamma(t))$. On the other hand, the set of points $t$ in the domain of $\gamma$ such that $\gamma(t)\in S^1$ but $\gamma'(t)$ is not tangent to $S^1$ is at most countable (these points $t$ can be covered by disjoint open sets) and hence has measure zero; see also the proof of \cite[Theorem 5.3]{Davis2019}. It follows that the chain rule condition for path differentiability is satisfied for almost all $t$. Since this is true for all curves $\gamma$, $f$ is path-differentiable.
 
  Item \ref{f:whitney} is clear in view of items \ref{f:regularity} and \ref{f:clarke}.
 
 If follows from item \ref{f:clarke} that $S^1\subseteq \crit f$.
 Recall $f=\phi$ in a neighborhood of $0$ and $0\in\crit \phi$, so $0\in\crit f$. If $x\notin \bigcup_iB_{1/2^{i-1}}(x_i)$, then $\|\nabla f(x)\|=\|\nabla \phi(x)\|=1$ and $\nabla f(x)$ is the only element of $\partial^cf(x)$, so $x\notin\crit f$. If $x\in B_{1/2^{i-1}}(x_i)$, then, taking $i_0$ large enough, we can ensure that, for $i\geq i_0$, we have, by the triangle inequality and the estimates above,
 \[\|\nabla f(x)\|\geq \left\|\frac{x}{\|x\|}\right\|- \left\|\nabla f(x)-\frac{x}{\|x\|}\right\|%
 >\frac12.
 \]
 This settles items \ref{f:whitney} and \ref{f:KL}.
\end{proof}

 \subsection{Limiting measures}\label{sec:circlemeasures}
 
 Here we recall some of the theory of \cite[Section 4]{ouroscillationcompensation}, and we show that in the example constructed in Section \ref{sec:circleconstruction}, the set of limiting measures is uncountable. We also compute those measures explicitly.
 
 \paragraph{The interpolating curve and its associated closed measures.}

Given a measure $\xi$ on $X$ and a measurable map $g\colon X\to Y$, the \emph{pushfoward $g_*\xi$} is defined to be the measure on $Y$ such that, for $A\subset Y$ measurable, $g_*\xi(A)=\xi(g^{-1}(A))$.

Recall that the \emph{support $\supp\mu$} of a positive Radon measure $\mu$ on $\R^n$ is the set of points $x\in \R^n$ such that $\mu(U)>0$ for every neighborhood $U$ of $x$. It is a closed set.

\begin{defn}\label{def:closedmeasure}
 A compactly-supported, positive, Radon measure on $\R^n\times\R^n$ is \emph{closed} if, for all functions $f\in C^\infty(\R^n)$, 
 \[\int_{\R^n\times\R^n}\nabla f(x)\cdot v\,d\mu(x,v)=0.\]
\end{defn}

Let $\pi\colon\R^n\times\R^n\to\R^n$ be the projection $\pi(x,v)=x$. To a measure $\mu$ in $\R^n\times\R^n$ we can associate its \emph{projected measure $\pi_*\mu$}. We have $\supp\pi_*\mu=\pi(\supp\mu)\subseteq\R^n$. 

Let $\gamma\colon\R_{\geq 0}\to\R^n$ be the curve linearly interpolating the sequence $\{x_i\}_i$ with $\gamma(t_i)=x_i$ for $t_i=\sum_{j=0}^{i-1}\varepsilon_i$ and $\gamma'(t)=v_i$ for $t_i<t<t_{i+1}$.

\medskip

For a bounded set $B\subset\R_{\geq0}$, we define a measure on $\R^n\times\R^n$ by
\[\meas{\gamma}{B}=\frac{1}{|B|}(\gamma,\gamma')_*\lebesgue_{B},\]
where $|B|=\int_B 1\,dt$ is the length of $B$, and $\lebesgue_B$ is the Lebesgue measure on $B$ (so that $\lebesgue_B(A)=|A|$ for $A\subseteq B$ measurable). 
If $\varphi\colon\R^n\times\R^n\to\R$ is measurable, then
\[\int_{\R^n\times\R^n}\varphi\,d\meas{\gamma}{B}=\frac1{|B|}\int_B\varphi(\gamma(t),\gamma'(t))\,dt.\]

\begin{lem}[{\cite[Lemmas 20 and 21]{ouroscillationcompensation}}]\label{lem:longintervals}
 In the weak* topology, the set of limit points of the sequence $\{\meas{\gamma}{[0,N]}\}_N$ is nonempty, and its elements are closed probability measures. Also,
 \[\overline{\bigcup_{\mu\in\acc\{\meas{\gamma}{[0,N]}\}_N}\pi(\supp\mu)}=\essacc\{x_i\}_i.\]
\end{lem}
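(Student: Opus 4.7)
The plan is to prove the two claims of the lemma separately: weak* compactness together with closedness of the limit measures, and then the support identification. The key tools are Banach--Alaoglu applied to the common compact support, and a sandwich comparing the arc length $\gamma$ spends in a set to the $\varepsilon_i$-weighted empirical counts appearing in Definition \ref{def:essacc}.

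For the first claim, since $\{x_i\}_i$ is bounded and each $v_i\in\partial^cf(x_i)$ has $\|v_i\|\leq\lip(f)$, the pair $(\gamma(t),\gamma'(t))$ takes values in a fixed compact set $K\times\overline{B_{\lip(f)}(0)}\subset\R^n\times\R^n$. Each $\meas{\gamma}{[0,N]}$ is therefore a probability measure supported in this common compact set, so Banach--Alaoglu produces at least one weak* accumulation point $\mu$, and testing against the constant $1$ shows $\mu$ is again a probability measure. For closedness, fix $h\in C^\infty(\R^n)$; since $\gamma$ is piecewise affine, $h\circ\gamma$ is piecewise $C^1$ with derivative $\nabla h(\gamma(t))\cdot\gamma'(t)$ off the finite grid $\{t_i\}$, and the fundamental theorem of calculus applied interval-by-interval telescopes to
\begin{equation*}
\int_{\R^n\times\R^n}\nabla h(x)\cdot v\,d\meas{\gamma}{[0,N]}(x,v)=\frac{h(\gamma(N))-h(\gamma(0))}{N}\xrightarrow[N\to\infty]{}0
\end{equation*}
using the boundedness of $h$ on $K$. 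Passing this identity to the weak* limit yields Definition \ref{def:closedmeasure} for $\mu$.

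For the support identification, the bridge is
\begin{equation*}
\pi_*\meas{\gamma}{[0,t_N]}(A)=\frac{1}{t_N}\big|\{s\in[0,t_N]:\gamma(s)\in A\}\big|,\qquad t_N=\sum_{i=0}^{N-1}\varepsilon_i,
\end{equation*}
together with the observation that each segment $\gamma([t_i,t_{i+1}])\subset\overline{B_{\varepsilon_i\lip(f)}(x_i)}$ with $\varepsilon_i\to 0$. Writing $U^{+\eta}$ for the $\eta$-thickening of an open set $U$, for any $\eta>0$ I would derive the two-sided sandwich
\begin{equation*}
\frac{1}{t_N}\sum_{\substack{x_i\in U\\ i\leq N}}\varepsilon_i-o(1)\ \leq\ \pi_*\meas{\gamma}{[0,t_N]}(U^{+\eta}) \quad\text{and}\quad \pi_*\meas{\gamma}{[0,t_N]}(U)\ \leq\ \frac{1}{t_N}\sum_{\substack{x_i\in U^{+\eta}\\ i\leq N}}\varepsilon_i+o(1),
\end{equation*}
valid as $N\to\infty$. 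For the inclusion $\essacc\{x_i\}_i\subseteq\overline{\bigcup_\mu\pi(\supp\mu)}$, given $x\in\essacc\{x_i\}_i$ and a neighborhood $V\ni x$, pick $N_k\to\infty$ realizing a value $c>0$ of the limsup in \eqref{eq:essaccdef} for $V$, extract a further subsequence so $\meas{\gamma}{[0,t_{N_k}]}\to\mu$ weak*, use the lower sandwich to get $\pi_*\meas{\gamma}{[0,t_{N_k}]}(V^{+\eta})\geq c-o(1)$, and apply Portmanteau on the closed set $\overline{V^{+\eta}}$ to obtain $\pi_*\mu(\overline{V^{+\eta}})\geq c>0$; hence $\overline{V^{+\eta}}$ meets $\supp\pi_*\mu=\pi(\supp\mu)$, and letting $V$ shrink to $\{x\}$ and $\eta\to 0$ places $x$ in the claimed closure. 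The reverse inclusion $\bigcup_\mu\pi(\supp\mu)\subseteq\essacc\{x_i\}_i$ is symmetric: for $y\in\pi(\supp\mu)$ and a neighborhood $W\ni y$, choose open $U\ni y$ with $U^{+\eta}\subseteq W$; then $\pi_*\mu(U)>0$, Portmanteau for the open set $U$ gives $\liminf_k\pi_*\meas{\gamma}{[0,t_{N_k}]}(U)\geq\pi_*\mu(U)>0$, and the upper sandwich transfers this into $\liminf_k\sum_{x_i\in U^{+\eta},\,i\leq N_k}\varepsilon_i/t_{N_k}>0$, which is a positive limsup for $W\supseteq U^{+\eta}$ in Definition \ref{def:essacc}. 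Since $\essacc\{x_i\}_i$ is closed, the identification follows after taking closures.

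The main technical obstacle is the sandwich itself --- the arc length spent by $\gamma$ on segments straddling $\partial U$ must be shown to be $o(t_N)$, which relies on both $\varepsilon_i\to 0$ and an appropriate thickening parameter --- together with its correct pairing with Portmanteau, where open versus closed sets have to be matched to liminf versus limsup so that the inequalities point in the usable direction in each inclusion. A small auxiliary point is the identity $\pi(\supp\mu)=\supp\pi_*\mu$, which holds because $\supp\mu$ is compact and $\pi$ is continuous. Everything else reduces to routine weak* compactness and the piecewise-linear chain rule.
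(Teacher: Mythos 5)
Your proof is correct, and there is nothing in the paper to compare it against: Lemma \ref{lem:longintervals} is imported verbatim from the companion paper (\cite[Lemmas 20 and 21]{ouroscillationcompensation}) and not reproved here. Your route --- uniform compact support plus the telescoping fundamental theorem of calculus for closedness, and the $\eta$-thickening sandwich paired with Portmanteau (open sets with $\liminf$, closed sets with $\limsup$) for the support identification --- is the standard argument and, as far as the structure of \cite{ouroscillationcompensation} suggests, essentially the intended one. The only point you silently elide is the mismatch between the index sets $[0,N]$ and $[0,t_N]$ (and between $t_N$ and the denominator $t_{N+1}=\sum_{0\leq i\leq N}\varepsilon_i$ of \eqref{eq:essaccdef}); this is harmless because $\|\meas{\gamma}{[0,T]}-\meas{\gamma}{[0,T']}\|$ tends to $0$ in total variation when $|T-T'|$ stays bounded and $T\to+\infty$, so the two sequences have the same weak* accumulation points, but it deserves a sentence.
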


A measure $\mu$ on $\R^n\times\R^n$ can be fiberwise disintegrated as
\[\mu=\int_{\R^n}\mu_x\,d\pi_*\mu(x),\]
where $\mu_x$ is a probability on $\R^n$ for each $x\in\R^n$.
We define the \emph{centroid field $\bar v_x$} of $\mu$ by
\[\bar v_x=\int_{\R^n}v\,d\mu_x(v).\]
An important intermediate result of \cite{ouroscillationcompensation} is

\begin{thm}[Subgradient-like closed measures are trivial {\cite[Theorem 23]{ouroscillationcompensation}}]\label{thm:vxvanishes}
 Assume that $f\colon\R^n\to\R$ is a path-differentiable function.
 Let $\mu$ be a closed measure on $\R^n\times\R^n$, and assume that every $(x,v)\in\supp\mu$ satisfies $-v\in\partial^cf(x)$. Then the centroid field $\bar v_x$ of $\mu$ vanishes for $\pi_*\mu$-almost every $x$.
\end{thm}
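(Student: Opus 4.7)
The strategy is to reinterpret the closed measure $\mu$ as a divergence-free vector-valued measure on $\R^n$, apply a Smirnov-type superposition principle to decompose it into closed Lipschitz loops, and use path-differentiability of $f$ along each loop to force the loops to be constant. Concretely, I would first associate to $\mu$ the vector Radon measure $V\coloneqq\int\bar v_x\,d\pi_*\mu(x)$, or equivalently the compactly supported 1-current
\[T_\mu(\omega)=\int\omega(x)(v)\,d\mu(x,v)=\int\omega(x)(\bar v_x)\,d\pi_*\mu(x)\]
acting on smooth compactly supported 1-forms $\omega$. Definition~\ref{def:closedmeasure} applied to $\omega=dg$ is exactly the condition $\partial T_\mu=0$, so $T_\mu$ is a closed normal 1-current; and since $\partial^cf(x)$ is convex and $-v\in\partial^cf(x)$ holds $\mu$-almost everywhere, averaging in the fiber gives $-\bar v_x\in\partial^cf(x)$ for $\pi_*\mu$-a.e.\ $x$, with $\|V\|\leq\lip(f)\,\pi_*\mu(\R^n)<+\infty$.

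Second, I would invoke Smirnov's decomposition theorem for closed normal 1-currents to write
\[T_\mu=\int T_\gamma\,d\nu(\gamma),\qquad \|T_\mu\|=\int L_\gamma\,d\nu(\gamma),\]
where $\nu$ is a finite Borel measure on the space of closed Lipschitz loops $\gamma\colon[0,L_\gamma]\to\R^n$ (with $\gamma(0)=\gamma(L_\gamma)$, parametrized by arc length) and $T_\gamma(\omega)=\int_0^{L_\gamma}\omega(\gamma(t))(\gamma'(t))\,dt$. The alignment clause of Smirnov's theorem ensures that for $\nu$-a.e.\ $\gamma$ and a.e.\ $t$ the unit tangent $\gamma'(t)$ agrees with the Radon--Nikodym direction of $T_\mu$, namely $\gamma'(t)=\bar v_{\gamma(t)}/\|\bar v_{\gamma(t)}\|$ wherever $\bar v_{\gamma(t)}\neq 0$.

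Third, I would apply path-differentiability to $f\circ\gamma$ on each loop produced by the decomposition. By Definition~\ref{def:pathdiff}, for a.e.\ $t$ we may take $w=-\bar v_{\gamma(t)}\in\partial^cf(\gamma(t))$ and obtain
\[(f\circ\gamma)'(t)=w\cdot\gamma'(t)=-\|\bar v_{\gamma(t)}\|.\]
Integrating around the closed loop and using $f(\gamma(0))=f(\gamma(L_\gamma))$ gives
\[0=\int_0^{L_\gamma}(f\circ\gamma)'(t)\,dt=-\int_0^{L_\gamma}\|\bar v_{\gamma(t)}\|\,dt,\]
which forces $L_\gamma=0$ for $\nu$-a.e.\ $\gamma$. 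Mass preservation then yields $\|V\|=\|T_\mu\|=\int L_\gamma\,d\nu(\gamma)=0$, so $V=0$ as a vector measure and $\bar v_x=0$ for $\pi_*\mu$-a.e.\ $x$, as desired. The chief obstacle is the second step: one needs Smirnov's alignment property that the loops in the decomposition are genuinely tangent to the Radon--Nikodym direction of $T_\mu$, which is classical but technical. If one prefers to bypass Smirnov, an alternative is a regularization argument in which $\bar v$ is mollified to a smooth vector field and one analyzes its flow, but making the $\varepsilon\to 0$ limit commute with path-differentiability is no easier.
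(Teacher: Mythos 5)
Your overall strategy---reading the closed measure as a boundaryless normal $1$-current $T_\mu=\bar v_x\,\pi_*\mu$, decomposing it by a superposition principle into curves aligned with $\bar v$, and applying path-differentiability along each curve to kill the mass---is sound, and it is essentially the route of the companion paper \cite{ouroscillationcompensation}, from which the present paper merely imports the statement without proof. Your first and third steps are fine modulo routine remarks: one needs that $\supp\mu_x\subseteq\{v:(x,v)\in\supp\mu\}$ for $\pi_*\mu$-a.e.\ $x$ and that a Borel version of $x\mapsto\bar v_x$ exists (standard disintegration facts), after which convexity and closedness of $\partial^cf(x)$ give $-\bar v_x\in\partial^cf(x)$ a.e., as you assert; one also needs the occupation measures of the decomposing curves to be absolutely continuous with respect to $\pi_*\mu$ so that this inclusion is available along a.e.\ curve, which follows from the mass-additivity clause.

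The genuine gap is in your second step. Smirnov's theorem does \emph{not} decompose a general divergence-free normal $1$-current into closed Lipschitz loops of finite length; it decomposes it into \emph{elementary solenoids}, i.e.\ weak-$*$ limits of normalized occupation currents $\frac1{2T}\int_{-T}^{T}\delta_{\gamma(t)}\otimes\gamma'(t)\,dt$ of $1$-Lipschitz curves $\gamma\colon\R\to\R^n$ that need not close up. The current carried by an irrational winding on a torus is solenoidal and compactly supported, yet admits no decomposition into closed loops tangent to it, so the identity $T_\mu=\int T_\gamma\,d\nu(\gamma)$ over loops with $\|T_\mu\|=\int L_\gamma\,d\nu(\gamma)$ is simply unavailable in general, and your telescoping $f(\gamma(0))=f(\gamma(L_\gamma))$ has nothing to telescope against. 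The argument survives the correction, but only after replacing exact closedness by a vanishing boundary term: along a solenoidal curve the alignment still gives $(f\circ\gamma)'(t)=-\|\bar v_{\gamma(t)}\|\leq 0$ for a.e.\ $t$, whence
\[
\frac1{2T}\int_{-T}^{T}\bigl\|\bar v_{\gamma(t)}\bigr\|\,dt
=\frac{f(\gamma(-T))-f(\gamma(T))}{2T}
\leq\frac{\sup_K f-\inf_K f}{2T}\longrightarrow 0
\quad\textrm{as $T\to+\infty$,}
\]
with $K\supseteq\pi(\supp\mu)$ compact; one must then convert this vanishing of time-averages into $\|T_\gamma\|(\{\bar v\neq0\})=0$ for $\nu$-a.e.\ $\gamma$ (using that elementary solenoids carry no internal cancellation of mass, together with a Fatou argument, since $\|\bar v\|$ is only Borel) before concluding $\|T_\mu\|=\int\|T_\gamma\|\,d\nu=0$ and hence $\bar v_x=0$ for $\pi_*\mu$-a.e.\ $x$. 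So: right architecture, but the closed-loop claim as stated is false and the limiting step it was meant to avoid must be carried out.
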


\paragraph{Analysis of the example.} 
 Let $\gamma$ be the interpolating curve of the sequence $\{x_i\}_i$, as defined in Section \ref{sec:circleconstruction}.
 In this example, the set of limit points of the sequence $\{\meas{\gamma}{[0,N]}\}_N$ consists of all measures on $T\R^2$ given by
 \begin{equation}\label{eq:mutheta}\mu^{\theta_0}=\int_{\theta_0-2\pi}^{\theta_0}\frac{\delta_{(r(\theta),r(\theta))}+\delta_{(r(\theta),-r(\theta))}}2\frac{e^{\theta-\theta_0}}{1-e^{-2\pi}}\,d\theta,\quad \theta_0\in\R,
 \end{equation}
 where $r(\theta)=(\cos \theta,\sin \theta)$ and $\delta_{(u,v)}$ denotes the Dirac delta in $\R^2\times\R^2$ concentrated at $(u,v)$. This is the measure that captures the dynamics occurring whenever $x_N$ has angle $\vartheta_N$ close to $\theta_0$. Of course, we have $\mu^{\theta_1}=\mu^{\theta_2}$ if $\theta_2-\theta_1$ is an integer multiple of $2\pi$, as well as $R^\xi_*\mu^{\theta_0}=\mu^{\xi+\theta_0}$ for $R^\xi$ the rotation by angle $\xi$.
 
 Before proving \eqref{eq:mutheta}, we remark that in accordance with Theorem \ref{thm:vxvanishes} we have, for $x\in S^1$,
 \[\mu_x^{\theta_0}=\frac{\delta_{(x,x)}+\delta_{(x,-x)}}2\]
 and
 \[\bar v_x=\int_{\R^2} v\,d\mu_x(v)=x-x=0.\]
 Also the conclusion of Lemma \ref{lem:longintervals} is verified: we have \[\essacc\{x_i\}_i=\pi(\supp\mu^{\theta_0})=S^1,\] 
 and each $\mu^{\theta_0}$ is a closed probability measure.

 Let us see how to arrive at \eqref{eq:mutheta}. 
 From the construction, it is clear that these measures must have the form \begin{equation*}
 \int_{\theta_0-2\pi}^{\theta_0}\frac{\delta_{(r(\theta),r(\theta))}+\delta_{(r(\theta),-r(\theta))}}2\varrho(\theta)\,d\theta
 \end{equation*}
 for some density $\rho$ on $\R$;
 the sum of Dirac deltas in \eqref{eq:mutheta} can be deduced from the fact that the vectors $v_i$ asymptotically approach $y$ and $-y$ as $x_i\to y\in S^1$ (with a subsequence), together with $\gamma'(t)=v_i$ for $t_i<t<t_{i+1}$. 
 
 Let us compute the density $\varrho$.
 Let $I\subset\R$ be an interval of length $0<\alpha=|I|\leq 2\pi$. Considering $I$ as an arc in the circle, we will write \[\beta\in I\!\!\!\!\mod 2\pi\] 
 if $\beta\in\R$ and there is some $k\in\Z$ such that $\beta+2\pi k\in I$.
 Let 
 \[m_0=\min\{i:\vartheta_i\in I\!\!\!\!\mod 2\pi\}.\] 
 Writing $P\approx Q$ if $P/Q\to1$ as $N\to+\infty$, if $m<n$ are two integers such that $\alpha=\vartheta_{n}-\vartheta_m=\sum_{k=m}^{n-1}\frac1{k\log k}$, then 
 \[\alpha\approx\int_m^ndx/x\log x=\log \log n-\log\log m;\]
 thus $n\approx m^{e^\alpha}$. 
 In other words, the intervals $J\subset\N$ such that $\vartheta_i+2\pi k\in I\!\!\mod 2\pi$  if $i\in J$ are approximately
 \[[m_0,m_0^{e^\alpha}],\;[m_0^{e^{2\pi}},m_0^{e^{\alpha+2\pi}}],\;[m_0^{e^{4\pi}},m_0^{e^{\alpha+4\pi}}],\;\dots,\;[m_0^{e^{2k\pi}},m_0^{e^{\alpha+2k\pi}}],\;\dots\]
 Letting $k_N\in\N$ be such that $N=m_0^{e^{\alpha+2\pi k_N}}$, we compute
 \begin{align*}
 \frac{\sum_{\substack{\vartheta_i\in I\\i\leq N}}\varepsilon_i}{\sum_{i=2}^N\varepsilon_i}
 &\approx \frac{\sum_{\substack{\vartheta_i\in I\\i\leq N}}2/i}{\sum_{i=2}^N2/i}\\
 &\approx\frac{1}{\log N}\sum_{k=0}^{k_N}\int_{m_0^{e^{2k\pi}}}^{m_0^{e^{\alpha+2k\pi}}}\frac{dx}x\\
 &=\frac{1}{\log N}\sum_{k=0}^{k_N}(e^\alpha-1)e^{2k\pi}\log m_0\\
 &=\frac{(e^\alpha-1)\log m_0}{\log N}\frac{e^{2\pi( k_N+1)}-1}{e^{2\pi}-1}\\
 &=\frac{(e^\alpha-1)\log m_0}{\log N}\frac{e^{2\pi-\alpha}\log N/\log{m_0}-1}{e^{2\pi}-1}\\
 &\to\frac{1-e^{-\alpha}}{1-e^{-2\pi}}\eqqcolon p(\alpha)
 \end{align*}
 as $N\to+\infty$.
 To compute $\varrho$, we apply that to an interval $I$ of the form $[\theta,\theta_0]=[\theta_0-\alpha,\theta_0]$ and we take the derivative
 \[\varrho(\theta)=\frac{dp(\theta_0-\theta)}{d\theta}=\frac{d}{d\theta}\frac{1-e^{-(\theta_0-\theta)}}{1-e^{-2\pi}}=\frac{e^{\theta-\theta_0}}{1-e^{-2\pi}},\quad \theta\in[\theta_0-2\pi,\theta_0).\]

\section{Example on a fractal set}
\label{sec:sardcounterexample}
In the spirit of Whitney's counterexample \cite{whitney} to the Morse--Sard theorem, we construct a function $f\colon\R^2\to\R$ and a bounded subgradient sequence $\{x_i\}_i$ satisfying:
\begin{enumerate}[label=C\arabic*.,ref=C\arabic*]
 \item\label{itex:pathdiff}\label{itex:first} $f$ is path-differentiable,
 \item\label{itex:nonconstcrit} $f(\crit f)\supset f(\essacc\{x_i\}_i)= f(\acc\{x_i\}_i)=[0,1]$,
 \item\label{itex:essacc} The accumulation set $\acc\{x_i\}_i$ is not contained in $\crit f$, and \[\essacc\{x_i\}_i\neq \acc\{x_i\}_i.\]
 \item\label{itex:fconv} $\{x_i\}_i$ and $\{f(x_i)\}_i$ do not converge.
 \item \label{itex:dim} The Hausdorff dimensions of $\essacc\{x_i\}_i$ and $\acc\{x_i\}_i$ are greater than 1 and satisfy \eqref{eq:fractaldimension}.
 \item \label{itex:slowdown} There are points $x$ and $y$ in $\essacc\{x_i\}_i\setminus\acc\{x_i\}_i$ such that we can take subsequences $\{x_{i_k}\}_k$ and $\{x_{i'_k}\}_k$ converging to $x$ and $y$, respectively, with $i_k< i'_k<i_{k+1}$ for all $k$ and \[\sup_k\sum_{p=i_k}^{i'_k}\varepsilon_p<+\infty.\]
 \item \label{itex:osccomp} There is no oscillation compensation on $\acc\{x_i\}_i\setminus\essacc\{x_i\}_i$. This means, precisely, that there is a continuous function $Q\colon\R^n\to[0,1]$ such that
 \begin{equation}\label{eq:speedaverages}
  \liminf_{N\to+\infty}\left\|\frac{\sum_{i=0}^{N}\varepsilon_iv_i Q(x_i)}{\sum_{i=0}^{N}\varepsilon_i Q(x_i)}\right\|>0.
 \end{equation}
 Crucially, since we strive to show that the dynamics on $\acc\{x_i\}_i\setminus\essacc\{x_i\}_i$ may be very different to the one displayed on $\essacc\{x_i\}_i$, we are not requiring the condition from \cite[Theorem 6(ii)]{ouroscillationcompensation}, namely, the existence of a sequence $\{N_i\}_i$ with 
 \[\liminf_{j\to+\infty}\frac{\sum_{i=1}^{N_j}\varepsilon_i Q(x_i)}{\sum_{i=0}^{N_j}\varepsilon_i}>0,\]
 which would force the focus to be on the dynamics around $\essacc\{x_i\}_i$.
 \item \label{itex:perp}\label{itex:last} The oscillations near $\acc\{x_i\}_i\setminus\essacc\{x_i\}_i$ are not asymptotically perpendicular to $\acc\{x_i\}_i$.
\end{enumerate}

\paragraph{Outline.}
To construct the function $f$, we will first define a fractal curve $\Gamma$ and $f$ on it, aiming to have $\Gamma\subset\crit f$ and $f(\Gamma)=[0,1]$.  We will also define a curve $J$ such that $\Gamma\cup J$ is a closed loop and $J$ only intersects $\crit J$ at its endpoints. We will construct an auxiliary path-differentiable function $h$ coinciding with $f$ on the curve $\Gamma$, and in Lemma \ref{lem:defh} we will prove some properties of $h$. We will next construct a series of loops $T_0,T_1, T_2,\dots$ that will help us define the sequence $\{x_i\}_i$, which we carefully specify so that it is almost a subgradient sequence of $h$. The dynamics of $\{x_i\}_i$ around $\Gamma$ will mimic that of the sequence in the example of Section \ref{sec:circle}, and near $J$ it will instead move relatively fast. To obtain $f$, we modify $h$ slightly in a way that ensures that $\{x_i\}_i$ is a subgradient sequence. In Proposition \ref{prop:propertiesf2} we show that $f$ has certain properties, which we will finally link, in our concluding remarks, to claims \ref{itex:first}--\ref{itex:last} above. 

The reader will find this example easier to follow after having looked at the construction of Section \ref{sec:circleconstruction}. The role of the function $\phi$ in that construction is taken by the function $h$ in the one presented below.

\paragraph{ A fractal curve.} Pick $\frac14<\alpha\leq\frac13$.
We begin by constructing a set $\Gamma\subset\R^2$ recursively as illustrated in Figure \ref{fig:Gamma}. For the first step, we pick four disjoint squares of side $\alpha$ inside the unit square, and we let $\Gamma_1$ be the closed set  consisting of the five disjoint paths joining the left and bottom sides of the unit square with those four squares successively, as in the figure. In each of the following inductive steps, we rescale the set $\Gamma_{i}$ we had for the previous step and we place new copies inside each of the four squares, perhaps rotated by an angle $\pi/2$, so that the paths making up $\Gamma_1$ connect with those of each rescaled copy of $\Gamma_i$. The set $\Gamma_{i+1}$ is then the union of $\Gamma_1$ with the four rescaled and appropriately rotated copies of $\Gamma_i$. This defines an increasing sequence of sets $(\Gamma_i)_{i\in \N}$ and $\Gamma=\overline{\bigcup_i\Gamma_i}$.

\begin{figure}
  \centering
    \includegraphics[width=\textwidth]{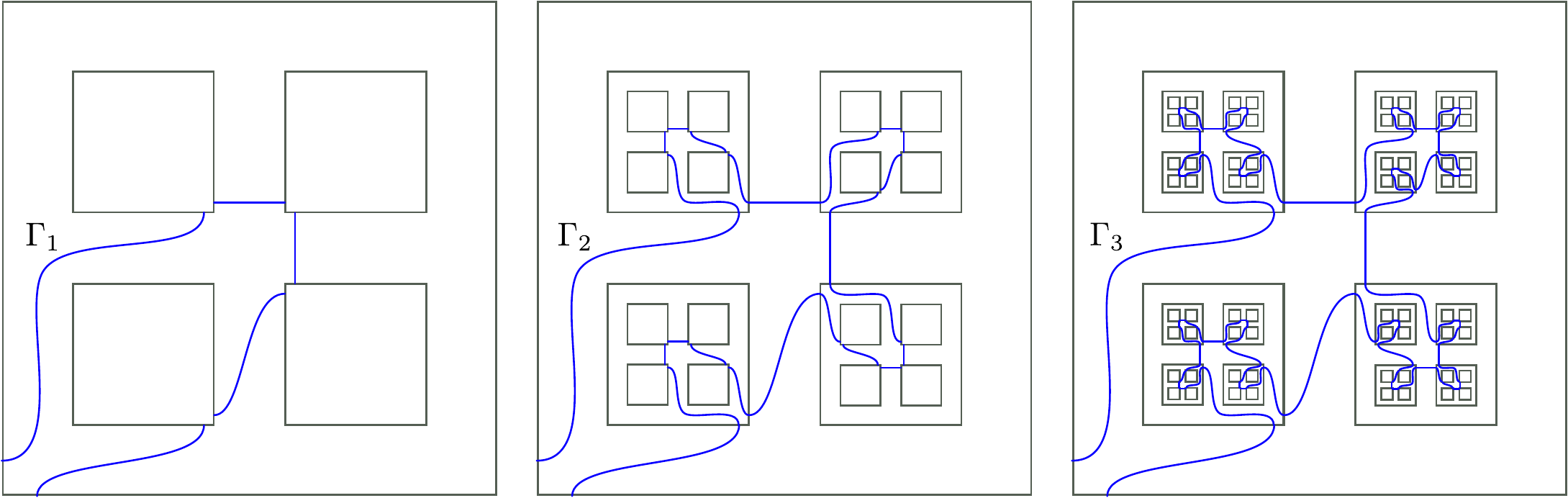}
  \caption{The first three steps $\Gamma_1$, $\Gamma_2$, and $\Gamma_3$ in the construction of the set $\Gamma$.}
  \label{fig:Gamma}
\end{figure}

We proceed to parameterize $\Gamma$ with a continuous curve $\phi\colon[0,1]\to\R^2$. To do this, we will imitate the procedure in the construction of the Cantor staircase. Thus, we first divide $[0,1]$ into nine contiguous intervals of equal length, namely, the nine intervals (we write in base 9) 
\[[0_9,0.1_9),[0.1_9,0.2_9),\dots,[0.7_9,0.8_9),[0.8_9,1_9].\] 
We define the map $\phi$ on each of the of the five odd-numbered intervals
\[[0_9,0.1_9),[0.2_9,0.3_9),[0.4_9,0.5_9),[0.6_9,0.7_9),[0.8_9,1_9]\]
to map the corresponding interval to one of the intervals making up $\Gamma_1$ (in Figure \ref{fig:Gamma}, these are the five blue curves in the left-hand diagram). Then iteratively, at step $i$, we divide each of the remaining intervals into nine equal subintervals, and we map the odd-numbered subintervals into the pieces of $\Gamma_i\setminus\Gamma_{i-1}$.
Thus for example the interval $[0.1_9,0.2_9)$ gets divided into
 \[[0.1_9,0.11_9),[0.11_9,0.12_9),\dots,[0.17_9,0.18_9),[0.18_9,0.2_9),\]
and the images of the intervals $[0.1_9,0.11_9)$ and $[0.18_9,0.2_9)$ will touch the images of the intervals $[0.0_9,0.1_9)$ and $[0.2_9,0.3_9)$, but the intervals
\[[0.12_9,0.13_9),[0.14_9,0.15_9),[0.16_9,0.17_9)\]
will not touch the image of the curve defined in the previous step; refer to the middle diagram in Figure \ref{fig:Gamma}.
The map $\phi$ is the unique continuous extension of the thus-defined function. 

The resulting curve $\phi$ has infinite arc length. Indeed at each construction step of the $\Gamma_i$, the paths in $\Gamma_i\setminus\Gamma_{i-1}$ are contained in $4^{i-1}$ squares, each of them contributing in an increase of at least $2\alpha^i$ in the total length. This results in a global increase of at least $(4\alpha)^i/2>1/2$ in the $i$-th step. 

Let $p\colon[0,1]\to\N\cup\{+\infty\}$ be the function that assigns to a number $t$ the first appearance of an even digit after the decimal point in its base 9 expansion, so that for example $p(0_9)=1$ and $p(0.757823_9)=4$. Thus if $t\in[0,1]$ and $p(t)<+\infty$, then $\phi(t)\in\Gamma_{p(t)}\setminus\Gamma_{p(t)-1}$, and if $p(t)=+\infty$ then $\phi(t)$ is a point in the Cantor set $\Gamma\setminus\bigcup_i\Gamma_i$ at the intersection of all the squares used in the construction.

\begin{lem}\label{lem:dimgamma}
 The Hausdorff dimension of $\Gamma$ is $\log_\alpha\frac14$.
\end{lem}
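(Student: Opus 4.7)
The plan is to decompose $\Gamma$ into a countable $1$-dimensional piece and the attractor of a self-similar iterated function system, and then invoke the Moran--Hutchinson dimension formula. Write $T_1,\dots,T_4$ for the four similarities of ratio $\alpha$ (each composed with the appropriate rotation by $0$ or $\pi/2$) that send the unit square onto the four small squares of side $\alpha$ used at the first step of the construction. The recursion $\Gamma_{i+1}=\Gamma_1\cup\bigcup_{j=1}^4 T_j(\Gamma_i)$ passes to the limit to give the fixed-point identity
\[
\Gamma=\Gamma_1\cup\bigcup_{j=1}^4 T_j(\Gamma).
\]
Unrolling the identity, every $x\in\Gamma$ either lies in some finite concatenation $T_{j_1}\cdots T_{j_k}(\Gamma_1)\subset \Gamma_{k+1}$, or, if this process never terminates, is determined by an infinite address $(j_n)$ whose point $\lim_n T_{j_1}\cdots T_{j_n}(x_0)$ (well-defined as each $T_j$ is a contraction of ratio $\alpha$) lies in the attractor $K$ of the iterated function system $\{T_1,\dots,T_4\}$. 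Hence
\[
\Gamma=\Big(\bigcup_{i\geq 1}\Gamma_i\Big)\cup K,
\]
and the reverse inclusion $K\subset\Gamma$ also holds because the points $T_{j_1}\cdots T_{j_n}(x_0)$ with $x_0\in\Gamma_1$ belong to $\bigcup_i\Gamma_i\subset \Gamma$, whose closure is $\Gamma$.

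Each $\Gamma_i$ is a finite union of smooth (piecewise linear) arcs of finite total length, hence a $1$-set, and the countable union still satisfies $\dim\bigcup_i\Gamma_i=1$. For $K$, the iterated function system satisfies the Open Set Condition: taking $U$ to be the interior of the unit square, the images $T_j(U)$ are the interiors of the four small squares, which are pairwise disjoint and contained in $U$. By Moran--Hutchinson, $\dim K=d$ where $d$ is the unique positive solution of $4\alpha^d=1$, namely $d=\log_\alpha\tfrac14$. The hypothesis $\alpha>\tfrac14$ forces $d>1$, so the $1$-dimensional piece is dominated and
\[
\dim\Gamma=\max(1,d)=\log_\alpha\tfrac14.
\]

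The only external ingredient is the classical dimension formula for self-similar sets satisfying the Open Set Condition; the rest is bookkeeping with the recursive definition of $\Gamma$. The mildly delicate step is ensuring that the limit points of $\bigcup_i\Gamma_i$ not already in the union really make up the IFS attractor $K$, which follows from the pigeonhole observation that any sequence $x_n\in\Gamma_{i_n}\setminus\Gamma_{i_n-1}$ with $x_n\to x\notin\bigcup_i\Gamma_i$ must have $i_n\to\infty$, forcing $x$ to lie in a nested sequence of level-$L$ sub-squares for every $L$---which is precisely membership in $K$.
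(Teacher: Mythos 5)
Your proof is correct, and it takes a genuinely different route from the paper's. The paper establishes the dimension by hand: it covers $\bigcup_i\Gamma_i$ and $\Gamma\setminus\bigcup_i\Gamma_i$ by balls at each scale and bounds $\mathcal H^d(\Gamma)$ from below and above directly, pinning down the threshold exponent. You instead exploit the self-similar structure: writing $\Gamma=\Gamma_1\cup\bigcup_{j=1}^4T_j(\Gamma)$, decomposing $\Gamma=\big(\bigcup_i\Gamma_i\big)\cup K$ with $K$ the attractor of the IFS $\{T_1,\dots,T_4\}$ of ratio $\alpha$, verifying the Open Set Condition with $U=(0,1)^2$, and invoking the Moran--Hutchinson formula $4\alpha^d=1$, together with the countable-stability of Hausdorff dimension to dispatch the $1$-dimensional arcs. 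Your argument is shorter and more conceptual, and it cleanly explains \emph{why} the answer is $\log_\alpha\tfrac14$, at the cost of importing a nontrivial theorem from fractal geometry; the paper's argument is longer but self-contained (it also, incidentally, has the conclusions of its two halves transposed as written, a slip your route avoids). Two points you compress that deserve a sentence each if this were fully written out: (i) that the open small squares are contained in $(0,1)^2$ even when rotated by $\pi/2$ (true since any interior point of a subset of $[0,1]^2$ is an interior point of $[0,1]^2$), so the OSC really does hold for $U=(0,1)^2$; and (ii) that $K\subset\Gamma$ because each level-$n$ sub-square contains points of $\Gamma_{n+1}$, so attractor points are genuine limit points of $\bigcup_i\Gamma_i$. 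Both are easy, and your final pigeonhole paragraph already contains the essential content of the reverse inclusion.
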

\begin{proof}
 The definition of Hausdorff dimension was recalled in \ref{q:10} in Section \ref{sec:intro}.
 
 Let $r>0$. As explained above, the length of $\Gamma_i\setminus\Gamma_{i-1}$ is at least $(4\alpha)^i/2$. Thus, a lower bound on the number of balls of radius $r$ necessary to cover $\Gamma_i\setminus\Gamma_{i-1}$ is $(4\alpha)^i/2r-1$ balls, for $i$ such that $\alpha^i>r$, i.e., $i<\log_\alpha r$. We have, for $d>0$,
 \begin{align*}
  \mathcal H^d(\Gamma)&\geq\mathcal H^d(\textstyle\bigcup_i\Gamma_i)\\
  & \geq\liminf_{r\to 0}\sum_{i=1}^{\log_\alpha r-1}r^d\left(\frac{(4\alpha)^i}{2r}-1\right)\\
  &=\liminf_{r\to0}\frac12r^{d-1}\frac{(4\alpha)^{\log_\alpha r}-1}{4\alpha-1}-(\log_\alpha r-1)r^d\\
  &=\liminf_{r\to0}\frac12r^{d-1}\frac{(4\alpha)^{\log_\alpha r}}{4\alpha-1}\\
  &=\liminf_{r\to0}\frac{1/2}{4\alpha-1}\exp(\left(d-1+\log_\alpha(4\alpha)\right)\log r)\\
  &=\liminf_{r\to0}\frac{1/2}{4\alpha-1}\exp(\left(d-\log_\alpha\tfrac14\right)\log r).
 \end{align*}
 Hence in order to have $\mathcal H^d(\Gamma)=0$ it is necessary that $d>\log_\alpha\frac14$ because this $\liminf$ must vanish and $\log r\to-\infty$. This translates to $\dim \Gamma\leq\log_\alpha\frac14$.

 Let us prove the opposite inequality. 
 For $r>0$ we cover  $\Gamma_i\setminus\Gamma_{i-1}$ with $A(4\alpha)^i/r$ balls of radius $r$ for $i$ such that $\alpha^i\geq r$; here $A>0$ is taken so that $4A\alpha^i$ is an upper bound for the contribution of the paths in each of the $4^{i-1}$ squares added.
 Since $\Gamma\setminus\bigcup_i\Gamma_i$ is also the intersection of the squares in the construction above, we know that it can be covered by $4^k$ balls of radius $2\alpha^k$, and these balls will cover the remaining part of $\bigcup_i\Gamma_i$. Hence we have, with $r=2\alpha^k$ and its consequence $\log_\alpha r=k+\log_\alpha2$,
 \begin{align*}
  \mathcal H^d(\Gamma)
  &\leq\mathcal H^d(\Gamma\setminus\textstyle\bigcup_i\Gamma_i)+\mathcal H^d(\textstyle\bigcup_i\Gamma_i)\\
  &\leq \displaystyle\liminf_{k\to+\infty} 4^kr^d+\liminf_{k\to+\infty} \sum_{i=1}^{\log_\alpha r}r^d\frac{A(4\alpha)^i}{r}\\
  &= \displaystyle\liminf_{k\to+\infty} 4^k\big(2\alpha^k\big)^d+\liminf_{k\to+\infty} \sum_{i=1}^{k+\log_\alpha 2}(2\alpha^k)^d\frac{A(4\alpha)^i}{2\alpha^k}\\
  &\leq\liminf_{k\to+\infty}e^{k(\log4+d\log\alpha)+d\log 2}+\liminf_{k\to+\infty} A(2\alpha^k)^{d-1}\frac{(4\alpha)^{k+\log_\alpha 2+1}-1}{4\alpha-1},
 \end{align*}
 which vanishes unless $\log4+d\log\alpha>0$, that is, unless $d<\log_\alpha\frac14$. This gives $\dim \Gamma\geq\log_\alpha\frac14$.
\end{proof}

\paragraph{Defining $f$ on $\Gamma\cup J$.} We define $f$ on $\Gamma$ imitating the construction of the Cantor staircase as follows. For a point $q\in\Gamma_i$, we let $t=\phi^{-1}(q)$, and we express $t$ in base 9, so that the first $k=p(t)-1$ numbers $a_1$, $a_2$, \dots $a_{k}$ in the base 9 expansion $x=(0.a_1a_2a_3\dots)_9$ are odd. We then let, for $1\leq i\leq k$, $b_i=(a_i-1)/2$, and $f(q)=(0.b_1b_2\dots b_k)_4$ in base 4. The values so-assigned for $f$ are illustrated in Figure \ref{fig:fvalues}. The reader will convince herself that with this definition, $f$ is constant on each path-connected component of $\bigcup_i\Gamma_i$ and can be uniquely extended to a continuous function on all of $\Gamma$.
\begin{figure}
  \centering
    \includegraphics[width=\textwidth]{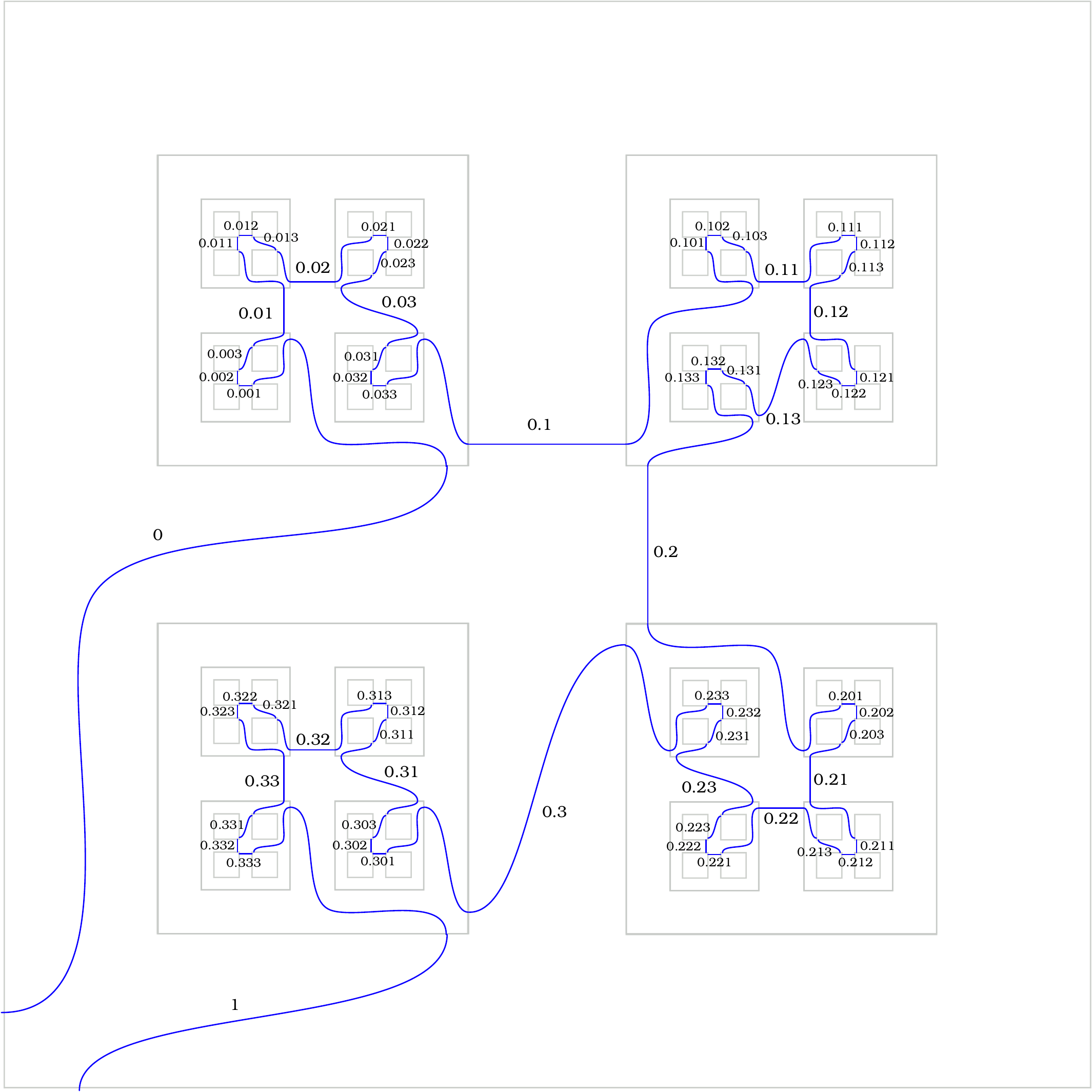}
  \caption{Values of $f$ on the set $\Gamma_3$. All numbers are in base 4.}
  \label{fig:fvalues}
\end{figure}

We remark that the function $f\circ\phi\colon[0,1]\to[0,1]$, just like the Cantor staircase, is continuous but not absolutely continuous; indeed, since it is constant on the intervals where $p$ is constant, its derivative $(f\circ\phi)'$ vanishes almost everywhere on $[0,1]$, yet $f\circ\phi$ is not constant, contradicting the fundamental theorem of calculus, which is valid for absolutely continuous functions. %

Let $J\subset \R^2\setminus((0,1)\times(0,1))$ be a smooth, non-self-intersecting curve joining the two intersections of $\Gamma$ with the boundary of the unit square. We define $f$ on $J$ to smoothly and strictly monotonously take the values between $0$ and $1$, keeping $f$ continuous.

\paragraph{Lipschitz continuity of $f$ on $\Gamma\cup J$.}
Let $j\colon(0,1)\times(0,1)\to\N\cup\{+\infty\}$ be, for each pair of points $s$ and $t$ in $(0,1)$, the position of the first digit of the base-9 expansion $s$ and $t$ that differs; thus for example $j(0.112_9,0.1223_9)=2$.
Since we have $|s-t|>9^{-j(s,t)-1}$, as $|s-t|\to0$ we necessarily have $j(s,t)\to+\infty$. 

Note also that if $j(s,t)>0$, then $\phi(s)$ and $\phi(t)$ must be contained in the same square of side $\alpha^{j(s,t)-1}$.
Thus, for some $A,B>0$,
$|\phi(s)-\phi(t)|\geq A\alpha^{j(s,t)}$ and $|f\circ\phi(s)-f\circ\phi(t)|\leq B 4^{-j(s,t)}$. 

Thus if $x,y\in\Gamma$, letting $s$ and $t$ be such that $\phi(s)=x$ and $\phi(t)=y$, we have $\|x-y\|=|\phi(s)-\phi(t)|\geq A\alpha^{j(s,t)}$ or, equivalently, $-j(s,t)\leq-\log_\alpha \frac{\|x-y\|}A$. Also,
\begin{align*}
 |f(x)-f(y)|
 &=|f\circ\phi(s)-f\circ\phi(t)|\\
 &\leq B4^{-j(s,t)}\\
 &\leq B4^{-\log_\alpha\frac{\|x-y\|}A}\\
 &=\frac BA\|x-y\|^{\log_\alpha\frac14} \\
 &\leq \frac BA\|x-y\|.
\end{align*}
because $\alpha>1/4$ so $\log_\alpha1/4>1$. This, together with the smoothness of $f$ on $J$ implies that $f$ on $\Gamma\cup J$ is Lipschitz. Let $\lip(f|_{\Gamma\cup J})$ be the Lipschitz constant of $f$ on $\Gamma\cup J$.

\paragraph{The auxiliary function $h$.}
 Let $C$ be a connected component of $J\cup\Gamma_i$ for some $i$, without its endpoints. As such, $C$ is a smooth, non-self-intersecting curve, diffeomorphic to an open interval. As is well known (see for example \cite[p. 109]{lang}) there exists a tubular neighborhood $W_C$ around $C$, by which we mean specifically:
 \begin{itemize}
 \item there is an open set $W_C\subset\R^2$ that contains $C$,
 \item there is an open set $U\subset\R^2$ of the form $(a,b)\times (-c,c)$ for some $a,b,c\in\R$, $c>0$, and
 \item there is a smooth, bijective function $\varphi_C\colon \overline U\to \overline{W_C}$ such that
  \begin{itemize} 
  \item the map $x\mapsto\varphi_C(x,0)$ is a parameterization of $C$ by arclength, 
  \item the map $y\mapsto\varphi_C(x,y)$ is a parameterization, by arclength, of the segment perpendicular to $C$ and passing through $x$.
  \end{itemize}
 \end{itemize}   
 We will refer to $\varphi_C$ as the \emph{chart} of $W_C$, and to the number $c>0$ as the \emph{thickness} of $W_C$.
 
 The statement of existence of the tubular neighborhoods $W_C$ is obvious if we choose all $\Gamma_i$ and $J$ to be composed of straight line segments and circle arcs, so readers unfamiliar with the general case may assume that this is the case.
 
\begin{lem}\label{lem:defh}
 There is a function $h\colon\R^2\to\R$ such that 
 \begin{enumerate}[label=\roman*.,ref=(\roman*)]
   \item \label{ith2:pathdiff} $h$ is locally Lipschitz and path-differentiable.
   
   \item \label{ith2:extendsf} $h$ coincides with $f$ on $\Gamma\cup J$.
   
   \item \label{ith2:C1} $h$ is $C^1$ on $\R^2\setminus (\Gamma\cup J)$.
   
   \item \label{ith2:piecewisesmooth} On a tubular neighborhood $W_C$ of each connected component $C$ of $J\cup \Gamma_i$, $h$ is defined by 
   \begin{equation}\label{eq:defh}
    h(\varphi_C(x,y))=f(\varphi_C(x,0))+2L|y|,
   \end{equation}
   where $\varphi_C$ is the chart of $W_C$. Hence
   $h$ is piecewise $C^\infty$ in $W_C$, with the singular locus of $h$  within $W_C$ coinciding exactly with $C$.
   
   \item \label{ith2:clarke} Let $L=\lip(f|_{\Gamma\cup J})$. If $p\in \Gamma_i$ for some $i$, and if $\mathbf n$ is a unit vector normal to $\Gamma_i$ at $p$, then 
   \[\partial^ch(p)=\{\lambda \mathbf n:-2L\leq \lambda\leq 2L\}, \quad p\in \textstyle\bigcup_i \Gamma_i.\]
   More precisely, the gradients of $h$ on each side of $\Gamma_i$ at $p$ are asymptotically equal to $2L\mathbf n$ and $-2L\mathbf n$, respectively, pointing away from $\Gamma_i$.
   
   Similarly, if now $p\in J$, $\mathbf n$ is a unit vector normal to $J$ at $p$, and $\mathbf t$ is the unit vector tangent to $J$ at $p$ that points in the clockwise direction (for the loop $\Gamma\cup J$) and if $a>0$ is the magnitude of the derivative of $f|_J$ at $p$, then
   \[\partial^ch(p)=\{-a\mathbf t+\lambda \mathbf n:-2L\leq \lambda\leq 2L\},\quad p\in J.\]
   More precisely, the gradients of $h$ on each side of $J$ at $p$ are asymptotically equal to $-a\mathbf t+2L\mathbf n$ and $-a\mathbf t-2L\mathbf n$, respectively, pointing away from $J$.
   
   \item \label{ith2:hessian} The norm of the Hessian of $h$ is bounded on each connected component of $W_C\setminus C$, for $C$ and $W_C$ as in item \ref{ith2:piecewisesmooth}.
   
   \item \label{ith2:crit} $\Gamma\subseteq\crit h$.
 \end{enumerate}
\end{lem}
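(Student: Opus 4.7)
The plan is to define $h$ by gluing together the local formula in \eqref{eq:defh} over a carefully chosen family of tubular neighborhoods, extend it to all of $\R^2$, and then verify each listed property. First, I would choose the thickness $c_C$ of each $W_C$ to shrink with the depth of the component: for $C$ a component of $\Gamma_i$, take $c_C \leq \alpha^i/10$, and use a modest thickness for $W_J$. This guarantees that $W_C$ and $W_{C'}$ with $C \neq C'$ intersect only when one curve is contained in the other (as happens when $C \subset \Gamma_i$ is a subcurve of a component $C' \subset \Gamma_{i'}$ with $i < i'$ that extends it). On such overlaps, both instances of \eqref{eq:defh} coincide: the charts $\varphi_C$ and $\varphi_{C'}$ parameterize the common arc up to reparameterization of $x$, the perpendicular coordinate $|y|$ is the same signed distance, and $f \circ \varphi_C(x,0) = f \circ \varphi_{C'}(x,0)$ because $f$ is constant on each connected component of $\bigcup_i \Gamma_i$. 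Define $h$ by \eqref{eq:defh} on $\bigcup_C W_C$, set $h(p) = f(p)$ at Cantor points $p \in \Gamma \setminus \bigcup_i \Gamma_i$ (which yields continuity there, as the values in nearby $W_C$'s converge to $f(p)$), and extend smoothly to the rest of $\R^2$ via a standard cutoff construction.

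Items \ref{ith2:extendsf} and \ref{ith2:piecewisesmooth} are then immediate. For \ref{ith2:C1}, any $q \notin \Gamma \cup J$ has positive distance to the closed set $\Gamma \cup J$, so lies in only finitely many $W_C$, whose consistent formulas give a $C^\infty$ representation in a neighborhood of $q$. For \ref{ith2:clarke}, I would work in a single chart: the gradient of $f(\varphi_C(x,0)) + 2L|y|$ in $(x,y)$-coordinates is $(\partial_x[f\circ\varphi_C(\cdot,0)],\,\pm 2L)$, which $D\varphi_C$ (acting as an isometry at $(x,0)$ that sends $(\partial_x,\partial_y)$ to $(\mathbf t,\mathbf n)$) pushes forward to the stated one-sided gradients — the tangential component vanishes on $\Gamma_i$ because $f|_C$ is constant, and equals $-a\mathbf t$ on $J$ because $f$ decreases along $J$ in the clockwise direction of the loop $\Gamma \cup J$. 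The Clarke subdifferential is the convex hull of these two one-sided gradients. Item \ref{ith2:hessian} follows because the second derivatives in the chart are zero (on $\Gamma_i$) or bounded (on $J$, where $f$ is smooth), and the ambient Hessian acquires only bounded curvature corrections from $\varphi_C$ on the compact closures $\overline{W_C \setminus C}$. For \ref{ith2:crit}, the computation in \ref{ith2:clarke} directly gives $0 \in \partial^c h(p)$ for $p \in \bigcup_i \Gamma_i$, and at a Cantor point $p$ I would invoke upper semicontinuity of $\partial^c h$ applied to a sequence $p_k \in \bigcup_i\Gamma_i$ with $p_k \to p$.

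I expect the main obstacle to be the path-differentiability claim in \ref{ith2:pathdiff}. Local Lipschitzness is routine from the construction and the bounded derivatives of the charts $\varphi_C$ on compact closures. For path-differentiability, the times when a Lipschitz curve $\gamma$ meets a smooth component of $\Gamma_i$ or of $J$ transversally form a measure-zero set, handled by a disjoint open-arc cover as in the proof of Proposition \ref{prop:propertiesf}\ref{f:pathdiff}, and the chain rule holds trivially whenever $\gamma(t) \notin \Gamma \cup J$. The delicate case is when $\gamma(t)$ lies in the Cantor part $\Gamma \setminus \bigcup_i \Gamma_i$, where $\Gamma$ has no tangent line and $\partial^c h$ is nontrivial. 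I would argue that for almost every such $t$ at which $\gamma'(t)$ exists, either $\gamma'(t)=0$ (so the chain rule is trivial) or $\gamma'(t)$ is constrained by the nested-square structure of $\Gamma$ to a direction in which $f \circ \gamma$ has a well-defined one-sided derivative matching $v \cdot \gamma'(t)$ for every $v \in \partial^c h(\gamma(t))$; the key input is a covering argument that bounds the Lebesgue measure of $\gamma^{-1}(\Gamma \setminus \bigcup_i \Gamma_i)$ by exploiting the shrinking squares used to build $\Gamma$ together with the Lipschitz constant of $\gamma$.
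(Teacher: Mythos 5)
Your construction of $h$ on the tubular neighborhoods, the chart computation for item \ref{ith2:clarke}, the Hessian bound, and the closedness/upper-semicontinuity argument for item \ref{ith2:crit} all match the paper's proof. The genuine gap is in item \ref{ith2:pathdiff}, precisely at the point you flag as delicate. Your proposed ``covering argument that bounds the Lebesgue measure of $\gamma^{-1}(\Gamma\setminus\bigcup_i\Gamma_i)$ by exploiting the shrinking squares together with the Lipschitz constant of $\gamma$'' cannot work as stated: the Cantor part is covered at stage $k$ by $4^k$ squares of side $\alpha^k$, whose total side length $(4\alpha)^k$ diverges since $\alpha>\frac14$ (indeed $\dim(\Gamma\setminus\bigcup_i\Gamma_i)=\log_\alpha\frac14>1$ by Lemma~\ref{lem:dimgamma}), and the Lipschitz constant of $\gamma$ only gives a \emph{lower} bound on the time needed to cross a square, not an upper bound on the time spent in it. The paper's Lemma~\ref{lem:cantornull} takes a different and essential route: the projections $P_1,P_2$ of the Cantor part onto the coordinate axes are measure-zero Cantor sets \emph{because} $\alpha\leq\frac13$, and one applies the one-dimensional change-of-variables formula to each coordinate component $\gamma_\ell$ on the set where $\gamma_\ell'\neq 0$ to conclude $|\gamma_\ell^{-1}(P_\ell)\cap A_\ell|=0$; the times with $\gamma'(t)=0$ are handled trivially. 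Without this (or an equivalent, e.g.\ pure unrectifiability of the Cantor part), your fallback of analyzing one-sided derivatives of $f\circ\gamma$ at Cantor points is not viable either, since $\partial^ch$ there is a large convex set and the chain-rule identity must hold for \emph{every} $v\in\partial^ch(\gamma(t))$.

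A secondary, smaller issue: ``extend smoothly to the rest of $\R^2$ via a standard cutoff construction'' glosses over why $h$ is $C^1$ across the boundary of $R=\overline{\bigcup_C W_C}$ away from $\Gamma\cup J$, and a naive partition-of-unity average with an arbitrary smooth function would alter $h$ inside parts of $W_C$, conflicting with item \ref{ith2:piecewisesmooth}. The paper uses a Whitney partition of unity of $\R^2\setminus R$ with the derivative bounds $\|D^k\phi_i(x)\|\lesssim 1+\dist(x,R)^{-|k|}$ and affine local models $g_i$ built from $h(p_i)$ and $\nabla h(p_i)$ at nearest points $p_i\in R$, exactly as in Whitney's extension theorem, to get the $C^1$ matching on $(\partial R)\setminus\Gamma$. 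You should either adopt that machinery or explain why your cutoff achieves the same boundary regularity.
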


This lemma will be proved in Appendix \ref{sec:proofh}.

\paragraph{A skeleton curve for the sequence.} We shall now define a sequence of smooth loops $T_1,T_2,\dots$  that will guide the trajectory of the sequence $\{x_i\}_i$. Figure~\ref{fig:trajectories} illustrates the shapes of the first elements of the sequence of closed curves that we now proceed to construct. 

The first one, $T_0$, will simply be a small loop around the origin, containing $J=T_0\setminus((0,1)\times(0,1))$ and closing it up with a circular arc contained in $[0,1]\times[0,1]$.

For $i>0$, the path $T_i$ will be equal to $\Gamma_i\cup J$ together with some small circular arcs glued to close up the loose ends in such a way that we obtain a smooth loop that does not touch the $4^{i+1}$ smaller squares of side $\alpha^{i+1}$ involved in the construction of $\Gamma_{i+1}$.

\begin{figure}
  \centering
    \includegraphics[width=\textwidth]{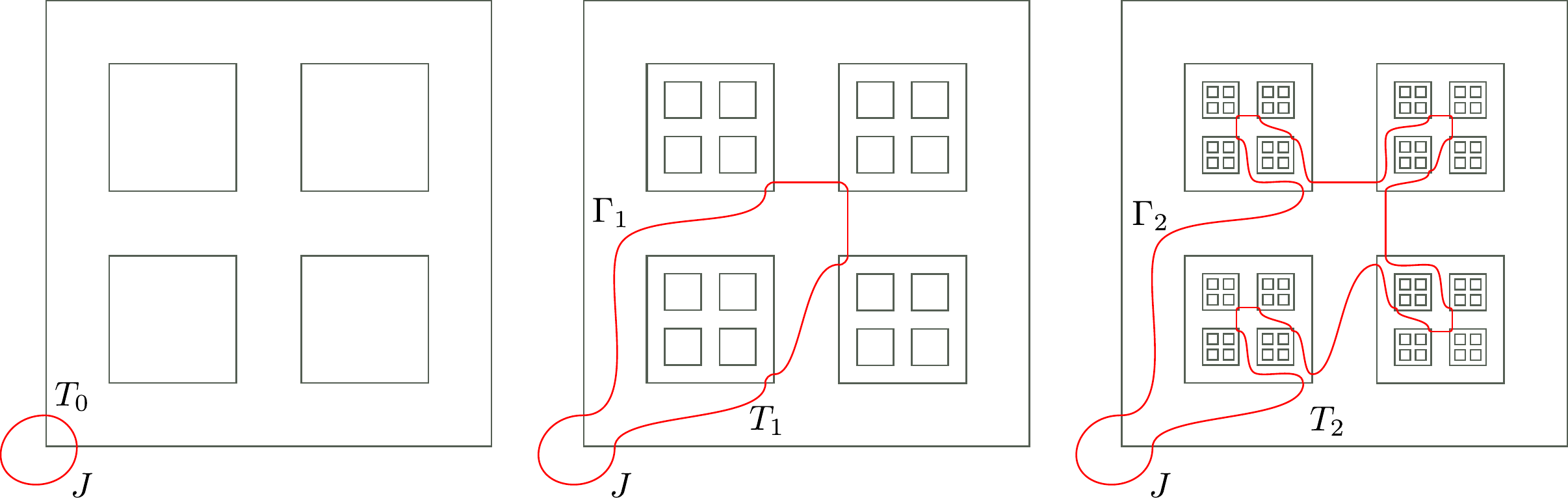}
  \caption{The first three loops, $T_0$, $T_1$, $T_2$, used to define the subgradient sequence $\{x_i\}_i$. The loop $T_i$ contains $J$ and, for $i>0$, it also contains $\Gamma_i$. }
  \label{fig:trajectories}
\end{figure}

\paragraph{Specification of the sequence $\{x_i\}_i$.}

Unlike what we did for the example described in Section \ref{sec:circle}, we will not try here to define $\{x_i\}_i$ explicitly; instead, we will take the lesson from that example as to what this sequence should look like. We pick $\{x_i\}_i$ to be a sequence of distinct points with $\|x_{i+1}-x_i\|\to0$ successively bouncing around each path $T_1, T_2,\dots$. Thus, the sequence will start near $J$, it will go around $T_0$ a few times, and while it is at $J$, it will start going around $T_1$, which it will do a few times, and then $T_2$, and so on.

 Let $L=\lip(f|_{\Gamma\cup J})$ and let $I_0, I_1,\dots\subset\N$ be the intervals during which $x_i$ will be going around each of the paths $T_j$, respectively. We will choose an initial value $i_0>0$ such that the sequence $\{x_i\}_{i=i_0}^\infty$ will satisfy:
\begin{enumerate}[label=S\arabic*.,ref=S\arabic*] 
 \item\label{S:first}\label{S:notselfaccum} Not self-accumulating. We require the sequence $\{x_i\}_i$ to be such that, for each $i$, there is some $r>0$ such that $B_r(x_i)\cap\{x_j\}_{j\neq i}=\emptyset$.
 
 \item\label{S:thickness} If $i\in I_j$ then 
 \[\frac 2{iL}\leq \operatorname{thickness}(W_C)\]
 for all connected components $C$ of $\Gamma_j\cup J$.
 
 \item\label{S:bouncing} Bouncing. If $j>0$ and $i,i+1\in I_j$, the points $x_i$ and $x_{i+1}$ are on opposite sides of $T_j$.
 
 \item\label{S:closetoTi} Distance to $T_j$. For $i\in I_j$, we require the points $x_i$ to remain at a distance  
 \[\left|\dist(x_i,T_j)-\frac{1}{iL}\right|\leq \frac1{i^2}.\]
 
 \item\label{S:aroundsmooth} Around $\Gamma_j\cup J$. Recall from Lemma \ref{lem:defh} that $h$ is piecewise smooth near $\Gamma_j\cup J$. If $j>0$, $i\in I_j$ and the closest point of $T_j$ to $x_i$ is $y\in \Gamma_j\cup J$, and if $\mathbf t$ is the unit vector tangent to $\Gamma_j\cup J$ pointing in the clockwise direction, then we require $h$ to be differentiable at $x_i$ and
 \[\left\|[x_{i+1}-x_i+\frac1{iL}\nabla h(x_i)]\cdot \mathbf t-\frac{1}{i\log i}\right\|\leq\frac1{i^2}.\]
 
 \item\label{S:aroundcloseoff} Around the circle arcs $T_j\setminus(\Gamma_j\cup J)$.  
 If $j>0$, $i\in I_j$, and the point $y$ of $T_j$ closest to $x_i$ is in $T_j\setminus(\Gamma_j\cup J)$,  and if $\mathbf t$ is a unit vector tangent to $T_j$ at $y$ pointing in the clockwise direction, we require
 \[\frac{3}{iL}\leq (x_{i+1}-x_i)\cdot \mathbf t\leq \frac{4}{iL}\]
 
 \item\label{S:last}\label{S:nobigjumps} Small jumps. 
 For all $i$ in the situation of \ref{S:aroundsmooth},
 \[\|x_{i+1}-x_i+\frac1{iL}\nabla h(x_i)\|\leq \frac{3}{iL}.\]
 For all $i$ in the situation of \ref{S:aroundcloseoff},
 \[\|x_{i+1}-x_i\|\leq \frac{6}{iL}.\]
\end{enumerate}

Let us explain how such a sequence can be constructed. First, we choose $i_0>0$ large enough that if $C_1$ is the connected component of $J\cup \Gamma_1$ containing $J$, then $2/i_0L\leq \operatorname{thickness}(W_{C_1})$. We then choose $x_{i_0}$ in $W_{C_1}$ such that the point of $C_1$ closest to $x_{i_0}$ is in $J$, and such that \ref{S:closetoTi} is satisfied with $j=0$. 

By induction, assuming that for some $i\geq i_0$ we have chosen $x_i$ satisfying \ref{S:first}-\ref{S:last}, we let $x_{i+1}$ be a point in the component on the opposite side of $T_j$ (thus complying with \ref{S:bouncing}) of the nonempty set $X_i$ determined by \ref{S:closetoTi} and \ref{S:nobigjumps} together with either \ref{S:aroundsmooth} or \ref{S:aroundcloseoff}, depending on the location of $x_i$. The set $X_i$ is indeed nonempty because the inequality in \ref{S:closetoTi} determines two stripes going parallel to $T_j$, while \ref{S:aroundsmooth} and \ref{S:aroundcloseoff} determine stripes perpendicular to $T_j$. So they intersect (with at least one connected component of the intersection on each side of $T_j$) as long as the step size is small enough with respect to the curvature of $T_j$; this can be ensured in the case of $j=0$ by increasing $i_0$, and in the case of $j>0$ by increasing the amount of times the sequence goes around $T_{j-1}$ before moving on to $T_j$. Although the intersection of the condition in \ref{S:closetoTi} and those of either \ref{S:aroundsmooth} or \ref{S:aroundcloseoff} may also include points located far from $x_i$, \ref{S:nobigjumps} forces the  choose a connected component that is directly ahead along $T_j$, and it is impossible that the sequence will jump very far. Thus \ref{S:bouncing}--\ref{S:nobigjumps} can be complied with.

To see that \ref{S:notselfaccum} can be complied with as well, note that, by \ref{S:closetoTi}, together with \ref{S:aroundsmooth} and \ref{S:aroundcloseoff}, a ball of radius $r=1/2i^2$ works automatically once the other conditions have been satisfied.
To ensure \ref{S:thickness} is true, we let the sequence go around each $T_j$ a few times until $i$ grows enough that the inequality in \ref{S:thickness} becomes true.

We remark that the precise form of \ref{S:aroundcloseoff} will not be used explicitly, and its only purpose is to keep the sequence moving around the circular arcs $T_j\setminus (\Gamma_j\cup J)$ at a moderate rate.

\paragraph{Construction of $f$.} 
Choose real numbers $\{r_i\}_i\subset\R$ such that $0<r_i<1/i^2$ and such that the disk $B_{3r_i}(x_i)$ of radius $3r_i$ centered at $x_i$ does not intersect $\Gamma$ and all the disks $B_{3r_i}(x_i)$ are disjoint. This is possible because of our specification \ref{S:notselfaccum}.

Let $\psi\colon\R^2\to[0,1]$ be a $C^\infty$ function with radial symmetry, $\psi(x)=\psi(y)$ for $\|x\|=\|y\|$, such that $\psi(x)=1$ for $x\in B_1(0)$, $\psi(x)=0$ for $\|x\|\geq 2$, and decreases monotonically on rays emanating from the origin. Let
 \[\psi_i(x)=\psi\Big(\frac{x-x_i}{r_i}\Big),\]
 so that $\psi_i$ equals 1 on $B_{r_i}(x_i)$ and vanishes outside $B_{2r_i}(x_i)$. Denote by $\lip(\psi)>0$ the Lipschitz constant of $\psi$, and by $\lip(\nabla\psi)>0$ the Lipschitz constant of its gradient. Note that the supports of the functions $\psi_i$ are pairwise disjoint and $\lip(\nabla\psi_i)=\frac{1}{r_i}\lip(\nabla\psi)$.

 Let %
 \[v_i=iL(x_i-x_{i+1}),\]%
 and define, for $h$ as in Lemma \ref{lem:defh},
 \[V_i(x)=(x-x_i)\cdot v_i+h(x_i).\]

 \begin{prop}\label{prop:propertiesf2} Let 
 \begin{equation}\label{eq:deff2}
  f(x)=\left(1-\sum_{i=0}^\infty\psi_i(x)\right)h(x)+\sum_{i=0}^\infty\psi_i(x)V_i(x).
 \end{equation}
 Then we have
  \begin{enumerate}[label=\roman*.,ref=(\roman*)]
   \item\label{f2:regularity} $f$ is piecewise $C^\infty$ in a tubular neighborhood $W_C$ of each connected component $C$ of $\Gamma_i\cup J$, $i>0$. 
   \item\label{f2:gradientsequence} $\{x_i\}_i$ is a subgradient sequence for $f$ with stepsizes
    \[\varepsilon_i=\frac1{iL}.\]
    In particular, $\sum_{i}\varepsilon_i=+\infty$ and $\sum_{i}\varepsilon_i^2<+\infty$.
   \item\label{f2:acc} $\acc\{x_i\}_i=\Gamma\cup J$.
   \item\label{f2:clarke} Let $p$ be a point in $\Gamma_i\cup J$ for some $i>0$. Then 
    \[\partial^c f(p)=\partial^ch(p).\]
   \item\label{f2:crit} The critical set of $f$ contains $\Gamma$, but $J\cap\crit f$ consists only of the two endpoints of $J$.
   \item\label{f2:pathdiff} $f$ is locally Lipschitz and path-differentiable.
  \end{enumerate}
 \end{prop}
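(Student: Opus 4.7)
The plan is to verify \ref{f2:regularity}--\ref{f2:pathdiff} in order, relying on Lemma \ref{lem:defh} and the specifications \ref{S:first}--\ref{S:last}. Items \ref{f2:regularity} and \ref{f2:gradientsequence} are almost immediate: the supports $B_{2r_i}(x_i)$ are pairwise disjoint and disjoint from $\Gamma$, so in any tubular neighborhood the sum in \eqref{eq:deff2} reduces locally to either $f=h$ or $f=(1-\psi_k)h+\psi_k V_k$ for a single $k$, giving piecewise $C^\infty$ regularity off $C$; at $x_i$ one has $\psi_i(x_i)=1$ and $\nabla\psi_i(x_i)=0$ by radial symmetry, so $\nabla f(x_i)=v_i$, and $v_i=iL(x_i-x_{i+1})$ rewrites as $x_{i+1}=x_i-\varepsilon_i v_i$ with $\varepsilon_i=1/(iL)$. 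For \ref{f2:acc}, \ref{S:closetoTi} forces $\dist(x_i,T_{j(i)})\to 0$ while $T_j\to\Gamma\cup J$ in Hausdorff distance, which gives one inclusion; for the reverse, the sequence circulates around each $T_j$ before moving to $T_{j+1}$ (by \ref{S:bouncing}, \ref{S:aroundsmooth}, \ref{S:aroundcloseoff}), so every point of $\bigcup_j(\Gamma_j\cup J)$ is accumulated, and density of $\bigcup_j\Gamma_j$ in $\Gamma$ combined with a diagonal extraction handles the Cantor-type limit points.

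The central step is \ref{f2:clarke}. The inclusion $\partial^c h(p)\subseteq\partial^c f(p)$ comes from approach sequences $y_k\to p$ avoiding all balls, where $\nabla f=\nabla h$. For the reverse inclusion, let $y_k\to p$ with $\nabla f(y_k)\to w$. Either $y_k$ eventually leaves every ball, in which case $w\in\partial^c h(p)$ by definition, or after extracting a subsequence we may assume $y_k\in B_{2r_{i_k}}(x_{i_k})$ with $i_k\to\infty$. In the second case I start from
\[
\nabla f(y_k)-\nabla h(y_k)=\nabla\psi_{i_k}(y_k)\bigl(V_{i_k}(y_k)-h(y_k)\bigr)+\psi_{i_k}(y_k)\bigl(v_{i_k}-\nabla h(y_k)\bigr),
\]
use the Taylor estimate $|V_i(y)-h(y)|=O(r_i)$, valid thanks to the bounded Hessian of $h$ on each smooth piece (Lemma \ref{lem:defh}), and invoke \ref{S:aroundsmooth} and \ref{S:nobigjumps} to control the deviation $v_i-\nabla h(x_i)$ (tangential component of order $1/\log i$, normal component bounded). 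This pins down $w$ as having the correct tangential part and a normal part in $[-2L,2L]$, placing it in $\partial^c h(p)$ by the explicit description in Lemma \ref{lem:defh}.

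The remaining items follow readily. For \ref{f2:crit}, \ref{f2:clarke} combined with Lemma \ref{lem:defh} gives $\bigcup_i\Gamma_i\subseteq\crit f$, outer semicontinuity of $\partial^c f$ promotes this to all of $\Gamma$, and for interior points of $J$ the subdifferential $\partial^c h$ avoids zero because its tangential part $-a\mathbf t$ is nonzero; the endpoints of $J$ lie in $\Gamma$ and so are critical. For \ref{f2:pathdiff}, local Lipschitz continuity is clear from the formula; for a Lipschitz curve $\gamma$ the classical chain rule applies where $\gamma(t)\notin\Gamma\cup J$, and where $\gamma(t)\in\Gamma\cup J$ the curve lies outside every ball (the balls are disjoint from $\Gamma\cup J$ by construction), so $f$ coincides with $h$ in a punctured neighborhood, and the path-differentiability of $h$ combined with the identification $\partial^c f=\partial^c h$ from \ref{f2:clarke} transfers the chain-rule condition to $f$. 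The hardest part I expect is \ref{f2:clarke}: one must quantitatively reconcile the bump-function corrections with the Taylor control of $h$ and the tangential/normal estimates on $v_i-\nabla h(x_i)$, to ensure that the modifications near each $x_i$ do not introduce spurious elements into the Clarke subdifferential at points of $\Gamma\cup J$.
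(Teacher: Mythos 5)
Your overall route is the same as the paper's (same decomposition of $\nabla f-\nabla h$, same reliance on Lemma \ref{lem:defh} and \ref{S:first}--\ref{S:last}), and items \ref{f2:regularity}, \ref{f2:gradientsequence}, \ref{f2:acc}, \ref{f2:crit} are handled correctly. But the quantitative core of item \ref{f2:clarke} --- which you yourself identify as the hardest part --- is under-powered as stated. The estimate $|V_i(y)-h(y)|=O(r_i)$ is useless here: the first term of your decomposition is multiplied by $\lip(\nabla\psi_i)=\lip(\nabla\psi)/r_i$, so $O(r_i)$ only yields an $O(1)$ contribution pointing in the direction of $\nabla\psi_i(y)$, which is radial about $x_i$ and generically neither tangent nor normal to $C$; such a term could push the limit $w$ out of $\partial^ch(p)$, and in particular could corrupt the tangential part you need to ``pin down.'' What saves the argument is the sharper fact that $v_i-\nabla h(x_i)\to 0$ in \emph{every} component (not ``tangential part $O(1/\log i)$, normal part bounded''): combining \ref{S:bouncing}, \ref{S:closetoTi}, \ref{S:aroundsmooth} with Lemma \ref{lem:defh}\ref{ith2:clarke} gives $\|v_i-\nabla h(x_i)-\tfrac{L}{\log i}\mathbf t\|=O(1/i)$, whence $|V_i(y)-h(y)|\leq\|v_i-\nabla h(w)\|\,\|y-x_i\|=O(r_i/\log i)+O(Kr_i^2)=o(r_i)$, and then the whole difference $\|\nabla f(y)-\nabla h(y)\|$ tends to $0$ uniformly as $y\to p$. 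With that, both inclusions of \ref{f2:clarke} follow at once and you do not need the separate ``normal part lands in $[-2L,2L]$'' argument.

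A second, smaller error: in \ref{f2:pathdiff} you claim that for $\gamma(t)\in\Gamma\cup J$ the function $f$ coincides with $h$ on a punctured neighborhood of $\gamma(t)$. This is false: since $\acc\{x_i\}_i=\Gamma\cup J$, every neighborhood of such a point contains infinitely many of the balls $B_{2r_i}(x_i)$ on which $f\neq h$. The conclusion still holds, but the correct justification is the one the paper uses: off $\Gamma\cup J$ the chain rule for $f$ is inherited from that of $h$ (plus smoothness of the $V_i$ corrections), and on $\Gamma\cup J$ it is the equality $\partial^cf=\partial^ch$ from \ref{f2:clarke}, together with $f=h$ on $\Gamma\cup J$ itself, that transfers the chain-rule identity.
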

 \begin{proof}
  By item \ref{ith2:piecewisesmooth} in Lemma \ref{lem:defh} we know that $h$ is piecewise $C^\infty$ in a tubular neighborhood $W_C$ of each connected component $C$ of $\Gamma_i\cup J$. Item \ref{f2:regularity} then follows from the facts that $V_i$ is $C^\infty$ and that the supports of the functions $\psi_i$ are piecewise disjoint, and the form of \eqref{eq:deff2}.
  
  From \eqref{eq:deff2} and the fact that $\nabla \psi_j(x_i)=0=1-\sum_k\psi_k(x_i)$ for all $i$ and all $j$, we have
  \[\nabla f(x_i)=\nabla V_i(x_i)=v_i.\]
  Thus $\partial^cf(x_i)=\{v_i\}$ and
  \[x_i-\varepsilon_iv_i=x_i-\frac1{iL}iL(x_{i+1}-x_i)=x_{i+1},\]
  which is the statement of item \ref{f2:gradientsequence}.
  
  Note that \ref{S:aroundsmooth} and \ref{S:aroundcloseoff} force the sequence to always advance around each $T_j$ and finish the loop.
  Item \ref{f2:acc} is then clear from the construction of $\Gamma$ and the loops $T_j\supset J$, together with the specification \ref{S:closetoTi} that forces the sequence $\{x_i\}_i$ to get ever closer to $\Gamma\cup J$.

  Let us prove item \ref{f2:clarke}. Fix $j>0$ and $p\in \Gamma_j\cup J$, and denote by $C$ the connected component of $\Gamma_j\cup J$ that contains $p$. Consider a point $y$ near $p$. In particular, we may assume that $y$ is not in the situation described in \ref{S:aroundcloseoff}. If $y\notin\bigcup_iB_{2r_i}(x_i)$, then $f=h$ on a neighborhood of $y$ and we have nothing to prove. Otherwise, we have $y\in B_{2r_i}(x_i)$ for some $i\geq 0$, and by \ref{S:thickness} we may assume $B_{2r_i}(x_i)$ is contained in the neighborhood $W_C$ of item \ref{ith2:piecewisesmooth} in Lemma \ref{lem:defh}. Item \ref{ith2:hessian} in Lemma \ref{lem:defh} means that the derivative of $\nabla h$ (the Hessian of $h$) is bounded on $W_C$, which means in particular that $\nabla h$ is Lipschitz in $W_C$; in other words, there is some $K>0$, depending only on $C$ such that, for all $z\in B_{2r_i}(x_i)$,
  \[\|\nabla h(z)-\nabla h(x_i)\|\leq K\|z-x_i\|.\]
  Note that it follows from \ref{S:bouncing}, \ref{S:closetoTi}, and \ref{S:aroundsmooth} and item \ref{ith2:clarke} of Lemma \ref{lem:defh} that, if $i$ is large enough,
  \begin{equation}\label{eq:oldS4}
   \varepsilon_i\left\|v_i-\nabla h(x_i)-\frac L{\log i}\mathbf t\right\|\leq \frac2{i^2}.
  \end{equation}
  By \eqref{eq:deff2}, the Lipschitzity of $\nabla \psi$, the fact that $0\leq \psi_i\leq 1$, a Taylor expansion with $w$ a point in the segment joining $x_i$ and $y$, the definition of $K$, the Cauchy-Schwarz and triangle inequalities, and \eqref{eq:oldS4}, %
  \begin{align*}
  \|&\nabla f(y)-\nabla h(y)\|=\left\|\nabla[(1-\psi_i(y))h(y)+\psi_i(y)V_i(y)]-\nabla h(y)\right\|\\
  &=\left\|\nabla \psi_i(y)(V_i(y)-h(y))+\psi_i(y)\left(\nabla V_i(y)-\nabla h(y)\right)\right\| \\
  &\leq \lip(\nabla \psi_i)|V_i(y)-h(y)|+\left\|v_i-\nabla h(y)\right\| \\ 
  &\leq \frac1{r_i}\lip(\nabla\psi)(\|h(x_i)+v_i\cdot(y-x_i)\\
  &\qquad\qquad\qquad-h(x_i)-\nabla h(w)\cdot(y-x_i)\|)+\|v_i-\nabla h(y)\|\\
  &\leq \frac1{r_i}\lip(\nabla\psi)\|v_i-\nabla h(w)\|\,\|y-x_i\|+\|v_i-\nabla h(y)\|\\
  &\leq \frac1{r_i}\lip(\nabla\psi)(2r_i)(\|v_i-\nabla h(x_i)\|+\|\nabla h(x_i)-\nabla h(w)\|)\\
  &\qquad\qquad\qquad+\|v_i-\nabla h(x_i)\|+\|\nabla h(x_i)-\nabla h(y)\|\\
  &\leq 2\lip(\nabla\psi)(\|v_i-\nabla h(x_i)\|+2Kr_i)+\|v_i-\nabla h(x_i)\|+2Kr_i\\
  &\leq %
  (2\lip(\nabla\psi)+1)\left(\frac{4L}{\log i} +2Kr_i\right)\to 0,
 \end{align*}
 as $y\to p$ because, in that case, $i\to +\infty$. So item \ref{f2:clarke} follows.
 
 Item \ref{f2:crit} follows from item \ref{f2:clarke} together with the same being true for $h$; see items \ref{ith2:clarke} and \ref{ith2:crit} in Lemma \ref{lem:defh}.
 
 Item \ref{f2:pathdiff} follows from item \ref{ith2:pathdiff} in Lemma \ref{lem:defh}, the form of \eqref{eq:deff2} on $\R^2\setminus(\Gamma\cup J)$, which ensures that the path differentiability of $h$ is inherited by $f$ on that region, and from item \ref{f2:clarke} above, which ensures the modification \eqref{eq:deff2} of $h$ does not change the path differentiability property on $\Gamma\cup J$.
 \end{proof}
 
 \begin{lem}\label{lem:essacc}
  $\essacc\{x_i\}_i=\Gamma$.
 \end{lem}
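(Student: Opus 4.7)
I would prove the two inclusions separately. For $\essacc\{x_i\}_i \subseteq \Gamma$, the inclusion is immediate by combining the general fact $\essacc \subseteq \crit f$ from \cite[Theorem 6(iii)]{ouroscillationcompensation} with Proposition \ref{prop:propertiesf2}\ref{f2:acc} (which identifies $\acc\{x_i\}_i = \Gamma \cup J$) and Proposition \ref{prop:propertiesf2}\ref{f2:crit} (from which $J \cap \crit f$ consists only of the two endpoints of $J$, both lying in $\Gamma$ by construction of $J$).

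For $\Gamma \subseteq \essacc\{x_i\}_i$, I would fix $p \in \Gamma$ and an open neighborhood $U$ of $p$ and aim to produce a constant $c_0 > 0$ together with infinitely many indices $N$ along which the ratio in \eqref{eq:essaccdef} is bounded below by $1 - e^{-c_0}$. The preliminary step is to verify that, for all $j$ sufficiently large, $T_j \cap U$ contains a connected sub-arc of tangential length at least $c_0$: when $p \in \Gamma_k$ for some $k$, take $c_0$ to be the length of an open sub-arc of the $\Gamma_k$-path through $p$ that lies inside $U$; when $p$ is a Cantor point, pick $m$ so that the scale-$\alpha^m$ square of the construction containing $p$ is itself contained in $U$, and note that for every $j > m$ the loop $T_j$ enters this square exactly once and traverses a connected piece inside it of length at least $\alpha^m \cdot \mathrm{length}(\Gamma_1)$.

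The crux is a short computation for the weight of a single pass through such a sub-arc. By specification \ref{S:aroundsmooth} together with Lemma \ref{lem:defh}\ref{ith2:clarke} (which gives that $\nabla h$ is asymptotically normal to $T_j$ on its $\Gamma$-portion, where $f$ is locally constant), the tangential increment satisfies $(x_{i+1}-x_i)\cdot \mathbf{t} = 1/(i\log i) + O(1/i^2)$. Hence a pass traversing tangential length $c$ from index $i_1$ to index $i_2$ satisfies $\log i_2 \approx e^{c}\log i_1$, its weight is
\[\sum_{i=i_1}^{i_2}\varepsilon_i = \frac{1}{L}\log\frac{i_2}{i_1} \approx \frac{(\log i_1)(e^c - 1)}{L},\]
while the total weight up to $i_2$ is $\sum_{i\leq i_2}\varepsilon_i \approx (\log i_2)/L = (\log i_1)e^{c}/L$. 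The ratio of the pass's contribution to the cumulative total is therefore $(e^c-1)/e^c = 1 - e^{-c}$.

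Setting $N = i_2$ at the end of each pass through $U$ of tangential length at least $c_0$ then gives a density in \eqref{eq:essaccdef} of at least $1 - e^{-c_0}$ (contributions from earlier $U$-passes only help the numerator); since such passes occur for infinitely many $j$, this yields $\limsup_N \geq 1 - e^{-c_0} > 0$ and hence $p \in \essacc$. The main technical point requiring care will be arguing from \ref{S:bouncing} and \ref{S:nobigjumps} that the $x_i$ visiting $U$ during a pass genuinely form a contiguous subsequence matching the tangential-length estimate, and checking that the $O(1/i^2)$ error in \ref{S:aroundsmooth} (negligible because $1/i^2 \ll 1/(i\log i)$ for large $i$) preserves the asymptotic $\log i_2 \approx e^{c}\log i_1$ with a uniform multiplicative constant.
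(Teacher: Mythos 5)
Your proof is correct, and it splits into one half that mirrors the paper and one half that takes a genuinely different route. For $\Gamma\subseteq\essacc\{x_i\}_i$, your computation is essentially the paper's: both arguments use \ref{S:aroundsmooth} together with Lemma \ref{lem:defh}\ref{ith2:clarke} to get a tangential drift of $1/(i\log i)$ along the $\Gamma$-portion of $T_j$, deduce $\log i_2\approx e^{c}\log i_1$ for a pass of arclength $c$, and conclude a density of roughly $1-e^{-c}$ at the end of each pass. (The paper handles only points of $\bigcup_i\Gamma_i$ and then invokes closedness of $\essacc$ to reach all of $\Gamma$, whereas you treat Cantor points directly via a small square of the construction contained in $U$; both work, and your version avoids having to justify that $\essacc$ is closed.) For the reverse inclusion $\essacc\{x_i\}_i\subseteq\Gamma$, you shortcut through the general theorem $\essacc\{x_i\}_i\subseteq\crit f$ of \cite[Theorem 6(iii)]{ouroscillationcompensation} combined with Proposition \ref{prop:propertiesf2}\ref{f2:acc} and \ref{f2:crit}; this is legitimate here since $f$ is path-differentiable and the sequence and stepsizes satisfy the required hypotheses, and the endpoints of $J$ lie on $\Gamma$. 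The paper instead gives a self-contained quantitative estimate: each pass through a neighborhood $U\subset J\setminus\Gamma$ contributes at most $2a/\kappa_U$ to the numerator of \eqref{eq:essaccdef}, the number of passes up to time $N$ is $O(\log\log\log N)$, and the denominator grows like $(\log N)/L$, so the density tends to $0$. Your route is shorter but leans on the external theorem; the paper's route costs a page but exhibits the actual decay rate of the time spent near $J$, which is of independent interest for claims \ref{itex:slowdown} and \ref{itex:osccomp}. The one place where your write-up would need to be fleshed out is the point you yourself flag: showing from \ref{S:bouncing}, \ref{S:closetoTi} and \ref{S:nobigjumps} that the indices of a single pass through $U$ form a contiguous block whose tangential displacements sum to the arclength of the traversed sub-arc, which is exactly the bookkeeping the paper does with its subsequences $\{M_i\}_i$ and $\{N_i\}_i$.
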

 \begin{proof}
  We will first show that $\bigcup_i \Gamma_i\subseteq\essacc\{x_i\}_i$, and from the fact that $\essacc\{x_i\}_i$ is closed it will follow that $\Gamma$ is contained in it. We use the notation $P\approx Q$ to mean that $P/Q\to1$.
  
  Let $j>0$, $p\in\Gamma_j$ that is not an endpoint of the connected component $C$ of $\Gamma_j$ containing $p$, and $\{N_i\}_i\subset\N$ be a subsequence such that $\lim_ix_{N_i}=p$. Let $\alpha>0$ be smaller than the distance between $p$ and the closest of the two endpoints of $C$. Let also $\{M_i\}_i\subset \N$ be a subsequence such that $q=\lim_{i}x_{M_i}$ is a point on $C$ at arclength $\alpha$ from $p$, $M_i<N_i$ for all $i$, and $\dist(x_k,C)<2/{kL}$ for all $M_i\leq k\leq N_i$. In view of \ref{S:closetoTi}, for each $i$ the sequence $x_{M_i},x_{M_i+1},\dots,x_{N_i}$ is bouncing around the segment of $C$ of length $\alpha$ that starts at $q$ and ends at $p$.  By item \ref{ith2:clarke} of Lemma \ref{lem:defh}, we know that $\partial^ch$ on the points of $C$ contains only vectors that are normal to $C$, so \ref{S:aroundsmooth} implies that 
  \[\tfrac12\alpha\leq \sum_{k=M_i}^{N_i}\frac{1}{i\log i}\approx \log\log N_i-\log\log M_i.\]
  This means that 
  \[\log M_i\leq \exp(\log\log N_i-\tfrac12\alpha)=e^{-\alpha/2}\log N_i.\]
  Hence also
  \[\sum_{k=M_i}^{N_i}\varepsilon_k=\sum_{k=M_i}^{N_i}\frac1{kL}\approx\frac1L(\log N_i-\log M_i)\geq \frac1L(1-e^{-\alpha/2})\log N_i.\]
  Similarly,
  \[\sum_{k=1}^{N_i}\varepsilon_k=\sum_{k=1}^{N_i}\frac1{kL}\approx\frac1L\log N_i.\]
  Thus the $\limsup$ in the definition \eqref{eq:essaccdef} of $\essacc\{x_i\}_i$ is at least $1-e^{-\alpha/2}>0$. This proves that $p\in \essacc\{x_i\}_i$, and thus also that $\Gamma\subseteq\essacc\{x_i\}_i$.
  
  In view of item \ref{f2:acc} of Proposition \ref{prop:propertiesf2} and the fact that $\essacc\{x_i\}_i\subseteq\acc\{x_i\}_i=\Gamma\cup J$, we now need to show that if $p'\in J\setminus \Gamma$ then $p'\notin\essacc\{x_i\}_i$.
  For such $p'$ we pick an open ball $U$ containing $p'$ such that $\overline U\cap \Gamma=\emptyset$ and 
  \[\kappa_U\coloneqq\inf_{x\in U}\dist(0,\partial^cf(x))>0,\]
  as is possible because of item \ref{f2:clarke} of Proposition \ref{prop:propertiesf2}, together with the fact that $f$ is strictly monotonous on $J$. Let $a>0$ be the arclength of $J\cap U$. Then from \ref{S:aroundsmooth} it follows that if $i_1<i_2$ are such that for all $i_1\leq k\leq i_2$ we have $x_k\in U$, while $x_{i_1-1},x_{i_2+1}\notin U$, then 
  \[2a\geq \sum_{k=i_1}^{i_2}\varepsilon_k\|v_k\|\geq \sum_{k=i_1}^{i_2}\varepsilon_k\kappa_U\]%
  For $\ell>0$, let $p_\ell$ denote the number of times the sequence goes around $T_\ell$.
  If $N>0$ is in $I_j$, so that the sequence is bouncing around $T_j$, then 
  \[\sum_{\substack{x_i\in U\\i\leq N}}\varepsilon_k\leq \frac{2a}{\kappa_U}\sum_{\ell=0}^jp_\ell.\]
  On the other hand, to estimate $j$ as a function of $N$ we compute a lower bound of the length of the path traversed by $x_1,\dots, x_N$,
  \begin{align*}
   \sum_{i=1}^{j-1} p_i\arclength \Gamma_i&\geq
   \sum_{i=1}^{j-1}p_i\sum_{k=1}^i\frac{(4\alpha)^k}2\\
   &=\sum_{i=1}^{j-1}p_i\frac{(4\alpha)^{i+1}-1}{2(4\alpha-1)}\\
   &\geq\sum_{i=1}^{j-1}\frac{(4\alpha)^{i+1}-1}{2(4\alpha-1)}\\
   &=\frac{1}{2(1-4\alpha)^2} ((4\alpha)^{j+1}-16\alpha^2+(j-1)(1-4\alpha))\\
   &=\frac{1}{2(1-4\alpha)^2}(4\alpha)^{j+1}+O(j).
  \end{align*}
  To turn this lower bound on the length of the path into an lower bound of the number $N$ of steps we use \ref{S:aroundsmooth} and the fact that $\nabla h$ is normal to $\Gamma_k$, so that we have
  \[
   \frac{1}{2(1-4\alpha)^2}(4\alpha)^{j+1}+O(j)\leq\sum_{k=2}^N\frac1{k\log k}\approx\log\log N.
  \]
  Whence 
  \[j\leq A\log\log\log N\]
  for some $A>0$, and \eqref{eq:essaccdef} can be bounded by
  \[
   \frac{\sum_{\substack{x_k\in U\\k\leq N}} \varepsilon_k}{\sum_{i=1}^N\varepsilon_k}\leq \frac{2a/\kappa_U}{(\log N)/L}\sum_{\ell=0}^jp_\ell=O\left(\frac{\sum_{\ell=0}^jp_\ell}{e^{e^j}}\right).
  \]
  Because of the fractal form of the construction of $\Gamma$, we see that the thickness of the tubular neighborhoods around the connected components of $\Gamma_i\cup J$ and those around the connected components around $\Gamma_{i+1}\cup J$ are related by a factor $\alpha$. From our calculation above we conclude that, the number of steps it takes to traverse each $T_j$ increases rapidly, so that in view of \ref{S:thickness}, we see that $p_\ell$ can be uniformly bounded. This means that $\sum_{\ell=0}^jp_\ell\leq Cj$ for some $C>0$, and hence, as $j\to+\infty$,
  \[\frac{\sum_{\ell=0}^jp_\ell}{e^{e^j}}\leq \frac{Cj}{e^{e^j}}\to 0.\]
  This proves that $J\setminus \Gamma$ is not in $\essacc\{x_i\}_i$, and concludes the proof of the lemma.
 \end{proof}

\paragraph{Conclusion.}
 Claim \ref{itex:pathdiff} was proved as item \ref{f2:pathdiff} of Proposition \ref{prop:propertiesf2}. 
 
 It follows from item \ref{f2:crit}  in Proposition \ref{prop:propertiesf2} and Lemma \ref{lem:essacc} that $\Gamma=\essacc\{x_i\}_i\subset\crit f$, and since $f(\Gamma)=[0,1]$, $f$ satisfies claim \ref{itex:nonconstcrit}.
 
 Claim \ref{itex:essacc} is true by item \ref{f2:acc} of Proposition \ref{prop:propertiesf2} and Lemma \ref{lem:essacc}. %
 
 Since $f(\Gamma)=[0,1]$ and $\{x_i\}_i$ bounces endlessly around $\Gamma\cup J$ by item \ref{f2:acc} in Proposition \ref{prop:propertiesf2}, the sequence $\{f(x_i)\}_i$ also does not converge, which is claim \ref{itex:fconv}.
 
 Claim \ref{itex:dim} follows from Lemmas \ref{lem:dimgamma} and \ref{lem:essacc} and item \ref{f2:acc} of Proposition \ref{prop:propertiesf2}.
 
 Claim \ref{itex:slowdown} requires some analysis. Let $x$ and $y$ be distinct points in $J$ with $f(x)>f(y)$. Let $\{x_{i_k}\}_k$ and $\{x_{i_k'}\}_k$ be subsequences that converge to them, respectively, and such that $i_k<i_k'$ for all $k$. Let $u\colon[0,T]\to J$ be a parameterization of $J$ such that $\|u'(t)\|=-(f\circ u)'(t)$ (this determines $T>0$), so that $u$ is a gradient curve, that is, $-u'(t)\in\partial^cf(u(t))$. Then it follows from item \ref{ith2:clarke} in Lemma \ref{lem:defh}, item \ref{f2:clarke} in Proposition \ref{prop:propertiesf2}, and \ref{S:aroundsmooth} that the subgradient sequence $\{x_i\}_i$ goes along $J$ at about the same speed as the neighboring curve $u$, so a very rough estimate of the amount of time it takes for it to go between $x$ and $y$ is $\sup_k\sum_{p=i_k}^{i_k'}\varepsilon_p\leq 2T$, which is claim \ref{itex:slowdown}.
 
 Claim \ref{itex:osccomp} is true because, if we choose the function $Q$ so that its support intersects $J$ but not $\Gamma$, then it follows from item \ref{ith2:clarke} in Lemma \ref{lem:defh}, item \ref{f2:clarke} in Proposition \ref{prop:propertiesf2}, and assumption \ref{S:aroundsmooth} that the averages in the $\liminf$ in \eqref{eq:speedaverages} asymptotically approach 
 \[\left\|\frac{\int_0^TQ(u(t))u'(t)dt}{\int_0^TQ(u(t))dt}\right\|\neq 0,\]
 with $u$ as in our discussion of claim \ref{itex:slowdown} above, which immediately implies inequality \eqref{eq:speedaverages}.
 
 Claim \ref{itex:perp} follows immediately from item \ref{ith2:clarke} in Lemma \ref{lem:defh}, and assumptions \ref{S:closetoTi} and \ref{S:aroundsmooth}.

 \appendix
 \section{Proof of Lemma \ref{lem:defh}}\label{sec:proofh}
 
 For $i>0$ and a connected component $C$ of $J\cup\Gamma_i$, and let $W_C$ be its tubular neighborhood with chart $\varphi_C$.
 
 On the tubular neighborhood $W_C$, we define $h$ by \eqref{eq:defh}.
 Observe that with this definition, $h$ is $C^\infty$ on each of the two connected components of $W_C\setminus C$, settling items \ref{ith2:piecewisesmooth} and \ref{ith2:hessian}. 
 
 Since the coordinates given by the charts $\varphi_C$ are compatible for the different connected components $C$ of the sets $J\cup\Gamma_i$, $i\in\N$, this defines $h$ on the closure of the union $R=\overline{\bigcup_CW_C}$. In particular $\Gamma\subset R$. From \eqref{eq:defh}, we see that $h$ is smooth on each connected component of $R\setminus \Gamma$.

 In the following, we will extend $h$ continuously, so the fact that $h$ coincides with $f$ on $\Gamma\cup J$, item \ref{ith2:extendsf}, will follow from the observation that, as we see from \eqref{eq:defh}, it is true on each of connected component of $J\cup\bigcup_i\Gamma_i$. 
 
 Item \ref{ith2:clarke} also follows directly from \eqref{eq:defh} because the components of elements of the Clarke subdifferential in the $\mathbf t$ and $\mathbf n$ directions coincide with the derivatives in the $x$ and $y$ variables, respectively, since 
 \[\left\|\frac{\partial\varphi_C}{\partial x}(x,0)\right\|=1 \quad\textrm{and}\quad \left\| \frac{\partial\varphi_C}{\partial y}(x,y)\right\|=1 .\]
 
 It follows from item \ref{ith2:clarke} that $C\cap\Gamma\subset\crit h$ for each connected component $C$ of $J\cup\bigcup_i\Gamma_i$, so in order to conclude that $\Gamma\subset\crit h$, item \ref{ith2:crit}, we observe that $\Gamma=\overline{\bigcup_C (\Gamma\cap C)}$ and recall that the graph of the Clarke subdifferential is closed.

 Recall
 
 \begin{lem}[Whitney partition of unity {\cite[Lemma 2.5]{bierstone}}]\label{lem:whitneypartition}
 Let $K$ be a compact subset of $\R^n$. There exists a countable family of functions $\phi_i\in C^\infty(\R^n\setminus K)$, $i\in\N$, such that
 \begin{enumerate}
  \item for each $x\in\R^n\setminus K$ there are at most $3^n$ numbers $i\in\N$ such that $x\in\supp\phi_i$,
  \item $\phi_i\geq 0$ for all $i\in\N$, and $\sum_{i}\phi_i(x)=1$ for all $x\in\R^n\setminus K$,
  \item\label{whit:dist} $2\dist(\supp\phi_i,K)\geq \diam(\supp\phi_i)$ for all $i\in\N$,
  \item \label{it:derivativebound} there exist constants $C_k>0$, depending only on $k$ and $n$, such that, if $x\in\R^n\setminus K$, then
  \[\|D^k\phi_i(x)\|\leq C_k\left(1+\frac{1}{\dist(x,K)^{|k|}}\right)\]
  where $D^kg$ denotes the $k^{\textrm{th}}$ derivative of $g$. 
 \end{enumerate}
\end{lem}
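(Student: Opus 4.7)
}

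The plan is to give the classical Whitney construction: first decompose the open set $\R^n\setminus K$ into dyadic cubes whose sizes are comparable to their distance to $K$, then smooth the indicator functions of slight enlargements of those cubes, and finally normalize.

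\emph{Step 1: Whitney cube decomposition.} For each $k\in\Z$, let $\mathcal D_k$ denote the standard grid of closed dyadic cubes in $\R^n$ of side $2^{-k}$. Call a cube $Q\in\mathcal D_k$ \emph{selected} if $2^{-k}\leq \dist(Q,K)<2^{-k+1}$, and let $\mathcal F$ be the set of maximal selected cubes (so no proper ancestor of $Q$ is also selected). A standard argument (see, e.g., Stein, \emph{Singular Integrals}, Ch.\ VI) shows that the cubes $\{Q\}_{Q\in\mathcal F}$ have pairwise disjoint interiors, tile $\R^n\setminus K$, and satisfy
\[
 \diam(Q)\;\leq\;\dist(Q,K)\;\leq\;4\,\diam(Q).
\]
Enumerate $\mathcal F=\{Q_i\}_{i\in\N}$ and let $s_i$ denote the side length and $c_i$ the center of $Q_i$. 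Let $Q_i^*$ be the cube concentric with $Q_i$ and scaled by the factor $\tfrac{9}{8}$; a direct count shows that for every $x\in\R^n\setminus K$ the number of indices $i$ with $x\in Q_i^*$ is at most $3^n$, and an elementary check using the size relation above gives item~\ref{whit:dist}.

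\emph{Step 2: Smooth bumps and normalization.} Fix once and for all a radially symmetric $\chi\in C^\infty_c(\R^n)$ with $\chi\equiv1$ on the cube $[-\tfrac12,\tfrac12]^n$ and $\supp\chi\subset(-\tfrac{9}{16},\tfrac{9}{16})^n$. Define the preliminary bumps
\[
 \widetilde\phi_i(x)=\chi\!\left(\frac{x-c_i}{s_i}\right),
\]
so that $\widetilde\phi_i\equiv 1$ on $Q_i$, $\supp\widetilde\phi_i\subset Q_i^*$, and $\widetilde\phi_i\in C^\infty(\R^n\setminus K)$. Since the cubes $Q_i$ cover $\R^n\setminus K$, the sum $S(x)\coloneqq\sum_i\widetilde\phi_i(x)$ satisfies $1\leq S(x)\leq 3^n$ at every $x\in\R^n\setminus K$. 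Set $\phi_i=\widetilde\phi_i/S$. The first three conclusions of the lemma follow: the overlap count is $3^n$, positivity and $\sum_i\phi_i=1$ are immediate, and item~\ref{whit:dist} was arranged in Step~1.

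\emph{Step 3: Derivative estimates.} The chain rule gives $\|D^k\widetilde\phi_i(x)\|\leq \|D^k\chi\|_\infty\, s_i^{-|k|}$. For $x\in\supp\widetilde\phi_i$ we have $x\in Q_i^*$, hence by Step~1 there is a constant $c_n>0$ (depending only on $n$) with $s_i\geq c_n\,\dist(x,K)$. Therefore
\[
 \|D^k\widetilde\phi_i(x)\|\;\leq\;C'_k\,\dist(x,K)^{-|k|},
\]
where the bounded-overlap property ensures that at most $3^n$ such terms are nonzero at $x$. It follows by induction that $\|D^k S(x)\|\leq C''_k\,\dist(x,K)^{-|k|}$. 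Since $S\geq 1$, Faà di Bruno applied to $1/S$ gives the same bound for $\|D^k(1/S)\|$. The Leibniz rule then produces the claimed estimate~\ref{it:derivativebound}, absorbing the harmless constant term in the statement to accommodate the case $|k|=0$.

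\emph{Main obstacle.} The combinatorial bookkeeping in Step~1 (showing that the maximal selected cubes have the stated size-versus-distance property and that the overlap constant for $\{Q_i^*\}$ equals $3^n$) is the only genuinely delicate part; the smoothing and the derivative induction in Steps~2--3 are standard once this geometric framework is in place. Since this decomposition is classical, one may alternatively appeal directly to the reference \cite[Lemma 2.5]{bierstone} and omit the construction above.
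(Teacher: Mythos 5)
The paper itself gives no proof of this lemma: it is quoted verbatim from \cite[Lemma 2.5]{bierstone} and used as a black box, so your closing remark that one may simply appeal to the reference is exactly what the paper does. Your reconstruction follows the classical Whitney route --- dyadic cubes of size comparable to their distance to $K$, dilated smooth bumps, normalization, then Leibniz and Fa\`a di Bruno for the derivative bounds --- which is indeed the argument behind the cited statement, and Steps 2--3 are sound once Step 1 is in place.

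Step 1 as written, however, has a genuine gap: the selection rule ``$Q\in\mathcal D_k$ is selected if $2^{-k}\leq\dist(Q,K)<2^{-k+1}$'' does not guarantee that the selected cubes cover $\R^n\setminus K$. Take $n=1$, $K=\{0,\tfrac{18}{5}\}$ and $x=\tfrac94$. At level $0$ the cube $[2,3]$ containing $x$ has $\dist(Q,K)=\tfrac35\notin[1,2)$; at level $1$ the cube $[2,\tfrac52]$ has $\dist(Q,K)=\tfrac{11}{10}\notin[\tfrac12,1)$; for $k\geq2$ one has $\dist(Q_k(x),K)\geq\tfrac{11}{10}>2^{-k+1}$; and for $k\leq-1$ the cube containing $x$ also contains $\tfrac{18}{5}$, so its distance to $K$ is $0$. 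Hence $x$ lies in no selected cube, $S(x)=0$, and the normalization in Step 2 breaks down. The repair is standard: select the cubes of $\mathcal D_k$ meeting the layer $\{2\sqrt n\,2^{-k}<\dist(\cdot,K)\leq4\sqrt n\,2^{-k}\}$ and take maximal ones (Stein's construction), which does tile $\R^n\setminus K$ with $\diam Q\leq\dist(Q,K)\leq4\diam Q$; your verification of item~\ref{whit:dist} and of the lower bound $s_i\geq c_n\dist(x,K)$ on $Q_i^*$ then goes through unchanged. Separately, the claim that the $\tfrac98$-dilates have overlap multiplicity at most $3^n$ is asserted rather than proved: adjacent Whitney cubes may differ in size by a factor of $4$, and the standard bound for the dilates is a larger dimensional constant (e.g.\ $12^n$ in Stein). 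That only changes the constant in item 1, and any dimensional constant suffices for the way the lemma is used in Appendix~\ref{sec:proofh}, but as written the count is not justified.
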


 Let $\{\phi_i\}_i$ be a Whitney partition of unity of $\R^2\setminus R$, as in Lemma \ref{lem:whitneypartition} with $K=R$. 
 For each $i$, choose be a point $p_i$ in $R$ minimizing the distance to $\supp\phi_i$.
 Although the definition \eqref{eq:defh} does not give $h$ on a neighborhood of each $p_i$ not on $\Gamma$, we may assume that $\nabla h(p_i)$ is well defined, perhaps after shrinking $R$ slightly.
 Let $g_i\colon\R^2\to\R$ be the affine function given by 
 \[g_i(x)=\begin{cases}
           h(p_i)+\nabla  h(p_i)\cdot(x-p_i),&p_i\notin \Gamma,\\
            h(p_i),&p_i\in\Gamma.
          \end{cases}
 \]
 Similar to the proof \cite{bierstone} to Whitney's extension theorem, for $x\in\R^2\setminus R$ we define
 \[
  h(x)=\sum_ig_i(x)\phi_i(x).
 \]
 On $\R^2\setminus R$, it is clear that $h$ is smooth, because locally it is  a finite sum of smooth functions.
 
 On the other hand, on the set $(\partial R)\setminus \Gamma$ that is the boundary of $R$ with $\Gamma$ removed, by construction $h$ is differentiable and its gradient is continuous. To see why, one can use the same technique as in the well-known proof of Whitney's extension theorem \cite[Theorem 2.3]{bierstone}; we sketch the main ideas. To show that $h$ is differentiable at $r\in (\partial R)\setminus \Gamma$, it is enough to show that 
 \[
  \left|h(x)-h(r)-\nabla h(r)\cdot (x-r)\right|=O(\|x-r\|^2)%
 \]
 for $x\in\R^2\setminus R$ such that the point $r$ minimizes the distance from $x$ to $R$. For this, one uses the fact that $ h$ is smooth on each connected component of $W_C\setminus(\Gamma\cup J)$, so that for $p_i$ near $r$ we have the Taylor estimates
 \[| h(p_i)+\nabla h(p_i)\cdot (r-p_i)- h(r)|=O(\|r-p_i\|^2)\]
 and
 \[ \|\nabla  h(r)-\nabla h(p_i)\|=O(\|r-p_i\|).\]
 These give
 \begin{align*}
 |h(x)&- h(r)-\nabla  h(r)\cdot (x-r)|\\
  &= \left|\sum_ig_i(x)\phi_i(x)- h(r)-\nabla  h(r)\cdot (x-r)\right|\\
  &\leq \sum_i\phi_i(x)\left|g_i(x)- h(r)-\nabla  h(r)\cdot (x-r)\right|\\
  &=\sum_i\phi_i(x)\left| h(p_i)+\nabla  h (p_i)(x-p_i)- h(r)-\nabla  h(r)\cdot (x-r)\right|\\
  &\leq\sum_i\phi_i(x)(| h(p_i)+\nabla  h(p_i)\cdot (r-p_i)- h(r)|\\
  &\qquad\qquad\qquad+|\nabla h(p_i)\cdot(x-r)-\nabla h(r)\cdot(x-r)|)\\
  &=\sum_i\phi_i(x)(O(\|r-p_i\|^2)+O(\|r-p_i\|\|x-r\|))\\
  &=\sum_i\phi_i(x)O(\|x-r\|^2)=O(\|x-r\|^2),
 \end{align*}
 since, by item \ref{whit:dist} in Lemma \ref{lem:whitneypartition},
 \begin{align*}
  \|x-p_i\|
   &\leq\diam(\supp\phi_i)+\dist(\supp\phi_i,R)\\
   &\leq \diam(\supp\phi_i)+\dist(x,R)\\
   &\leq 2\dist(\supp\phi_i,R)+\dist(x,R)\\
   &\leq3\dist(x,R)=3\|x-r\|
 \end{align*}
 and 
 \[\|r-p_i\|\leq \|r-x\|+\|x-p_i\|\leq 4\|x-r\|.\]
 Similarly, since $\sum_i\nabla\phi_i(x)=0$ because $\sum_i\phi_i(x)=1$, so that also $\sum_i\nabla\phi_i(x)(\nabla h(r)\cdot(x-r))=0$, and using the triangle inequality and Lemma \ref{lem:whitneypartition},
 \begin{align*}
  \|&\nabla h(r)-\nabla h(x)\|=\left\|\nabla h(r)-\nabla\sum_ig_i(x)\phi_i(x)\right\|\\
  &=\left\|\nabla h(r)-\nabla\sum_i\phi_i(x)(h(p_i)+\nabla h(p_i)\cdot(x-p_i))\right\|\\
  &\leq\left\|\sum_i\phi_i(x)(\nabla h(r)-\nabla h(p_i))\right\|+\left\|\sum_i\nabla\phi_i(x)(h(p_i)+\nabla h(p_i)\cdot(x-p_i))\right\|\\
   &=\left\|\sum_i\phi_i(x)(\nabla h(r)-\nabla h(p_i))\right\|+\Bigg\|\sum_i\nabla\phi_i(x)[(h(p_i)
    +\nabla h(p_i)\cdot(r-p_i))\\
   &\hspace{6cm}+(\nabla h(p_i)\cdot(x-r)+\nabla h(r)\cdot(x-r))]\Bigg\|\\
   &\leq\sum_i\phi_i(x)O(\|r-p_i\|)+\sum_i\|\nabla\phi_i(x)\|\big(O(\|r-p_i\|^2)\\
   &\hspace{6cm}+O(\|p_i-r\|)\,O(\|x-r\|)\big)\\
   &\leq O(\|r-p_i\|)+3^2C_1\left(1+\frac1{\|x-r\|}\right)\big(O(\|r-p_i\|^2)\\
   &\hspace{6cm}+O(\|p_i-r\|)\,O(\|x-r\|)\big)\\
   &=O(\|x-r\|)
 \end{align*}
 Thus $h$ is $C^1$ on $\R^2\setminus(\Gamma\cup J)$, which settles item \ref{ith2:C1}. 
 
 It also follows from that, together with the fact that from \eqref{eq:defh} we know that $h$ is Lipschitz with constant $\leq 3L$ on $R$, that $h$ is locally Lipschitz.
 
 To prove that $h$ is path-differentiable, 
 let $\gamma\colon\R\to\R^2$ be a Lipschitz curve. By Lemma \ref{lem:cantornull}, $\gamma^{-1}(\Gamma\setminus\bigcup_i\Gamma_i)$ is a set of measure zero. The set of points $t\in\R$ such that $\gamma(t)\in \bigcup_i\Gamma_i$ with $\gamma'(t)$ not tangent to $\bigcup_i\Gamma_i$ is countable as each such $t$ is isolated. If $\gamma(t)$ is in  $J\cup\Gamma_i$ for some $i$, and $\gamma'(t)$ is tangent to $J\cup \Gamma_i$, then it follows from item \ref{ith2:clarke} that the chain rule condition for path-differentiability holds at $t$, and this condition also holds on $\R^2\setminus (\Gamma\cup J)$ because of item \ref{ith2:C1}. This proves item \ref{ith2:pathdiff}.

\begin{lem}\label{lem:cantornull}
 If $\gamma\colon\R\to\R^2$ is Lipschitz with $\gamma'(t)\neq 0$ for almost every $t$, then $\gamma^{-1}(\Gamma\setminus\bigcup_i\Gamma_i)$ has measure zero.
\end{lem}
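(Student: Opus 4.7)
Let me write $E:=\Gamma\setminus\bigcup_i\Gamma_i$ and argue by contradiction. Suppose $\mathcal{L}^1(\gamma^{-1}(E))>0$. By Rademacher and the standing hypothesis, the set $A:=\{t:\gamma'(t)\text{ exists and is nonzero}\}$ has full measure, and a classical measurable decomposition (cf.\ Federer \S3.2.5) writes $A=\bigsqcup_n A_n$ on each of which $\gamma$ is bi-Lipschitz: $\|\gamma(s)-\gamma(t)\|\geq c_n|s-t|$ for some $c_n>0$ and all $s,t\in A_n$. Some $B:=A_n\cap\gamma^{-1}(E)$ has positive measure; I fix such an $n$ and pick a Lebesgue density point $t_0\in B$ at which $\gamma$ is differentiable, setting $p:=\gamma(t_0)\in E$ and $v:=\gamma'(t_0)\neq 0$.

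\textbf{Thin-strip linearization.} Differentiability at $t_0$ gives $\gamma(t)=p+v(t-t_0)+o(h)$ uniformly on $[t_0-h,t_0+h]$, so $\gamma([t_0-h,t_0+h])$ lies in a tubular strip $T_h$ of length $\sim 2|v|h$ and width $w_h=\epsilon(h)\,h$ with $\epsilon(h)\to 0$ as $h\to 0$. The crux is then a geometric estimate $\mathcal{H}^1(T_h\cap E)=o(h)$, which combined with the bi-Lipschitz bound will contradict the density of $B$ at $t_0$.

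\textbf{Cantor bound.} Recall that $E\subset R_k$, a disjoint union of $4^k$ squares of side $\alpha^k$, and that the four level-$k$ children of a level-$(k-1)$ parent sit at its corners separated by horizontal and vertical gaps of width $(1-2\alpha)\alpha^{k-1}\geq\alpha^{k-1}/3$ (using $\alpha\leq 1/3$). Pick $k=k(h)$ with $\alpha^k\leq 2|v|h<\alpha^{k-1}$, so that $T_h\cap E$ lies entirely in the unique level-$(k-1)$ square containing $p$ (sibling squares are at distance $\geq(1-2\alpha)\alpha^{k-2}\geq\alpha^{k-1}>2|v|h$). The combinatorial key is that no three of the four corners of a square are collinear, so any straight line crosses at most two of the four corner-children; the same conclusion persists for a strip whose width stays below $(1-2\alpha)\cdot(\text{parent side})$. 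Iterating: at every generation $j$ for which $w_h<\alpha^{k+j-1}/3$, $T_h$ meets at most two of the four children of each previously-visited square. This thin-strip condition holds up to some $J(h)=\lfloor\log_\alpha(3\epsilon(h)/2|v|)\rfloor\to+\infty$, whence
\[
\mathcal{H}^1(T_h\cap E)\;\leq\;2^{J(h)}\cdot\sqrt{2}\,\alpha^{k(h)+J(h)-1}\;=\;\sqrt{2}\,\alpha^{k(h)-1}\,(2\alpha)^{J(h)}\;=\;O(h)\,(2\alpha)^{J(h)}\;=\;o(h),
\]
because $2\alpha\leq 2/3<1$.

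\textbf{Contradiction.} Combining the bi-Lipschitz estimate with $\gamma(B\cap[t_0-h,t_0+h])\subset T_h\cap E$,
\[
c_n\cdot\mathcal{L}^1\bigl(B\cap[t_0-h,t_0+h]\bigr)\;\leq\;\mathcal{H}^1\bigl(\gamma(B\cap[t_0-h,t_0+h])\bigr)\;\leq\;\mathcal{H}^1(T_h\cap E)\;=\;o(h),
\]
while Lebesgue density gives $\mathcal{L}^1(B\cap[t_0-h,t_0+h])=2h(1-o(1))$, forcing $2c_nh(1-o(1))\leq o(h)$, impossible for $h$ small. The main obstacle is the iterative ``thin strip meets only two of four children'' step: it requires both the uniform sibling separation $(1-2\alpha)\alpha^{k-1}\geq\alpha^{k-1}/3$ (so that the bound propagates to descendants) and the geometric collapse $2\alpha<1$ (so that the iterated estimate tends to zero), both of which are precisely secured by the hypothesis $\alpha\leq 1/3$.
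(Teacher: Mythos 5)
Your approach is genuinely different from the paper's. The paper projects $E:=\Gamma\setminus\bigcup_i\Gamma_i$ onto each coordinate axis, notes the two projections are Cantor sets of Lebesgue measure zero when $\alpha\leq 1/3$, splits $\R=A_1\cup A_2\cup B$ according to which coordinate of $\gamma'$ is nonzero (plus a null set $B$), and then uses the one-dimensional change-of-variables formula on countably many pieces where $\gamma_\ell$ is injective to conclude that $\gamma_\ell^{-1}(P_\ell)\cap A_\ell$ is null. This is short and avoids any two-dimensional covering geometry. You instead use a density-point argument with a measurable bi-Lipschitz decomposition and a thin-strip covering estimate for $E$. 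That route can work, but as written it has two genuine problems.

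First, the displayed bound $\mathcal{H}^1(T_h\cap E)=o(h)$ is not a Hausdorff \emph{measure} bound. Since $\dim E=\log_\alpha\tfrac14>1$ for $\alpha>\tfrac14$, one has $\mathcal{H}^1(E)=+\infty$, and generically $\mathcal{H}^1(T_h\cap E)=+\infty$ as well, so the middle inequality $\mathcal{H}^1(\gamma(B\cap[t_0-h,t_0+h]))\leq\mathcal{H}^1(T_h\cap E)$ is vacuous. What your covering of $T_h\cap E$ by $2^{J}$ squares of side $\alpha^{k+J-1}$ actually controls is the Hausdorff \emph{content}, not the measure. The fix is straightforward but must be made: cover $\gamma(B\cap[t_0-h,t_0+h])$ by those same squares, pull back via the bi-Lipschitz lower bound to get at most $2^{J}$ subsets of $\R$ each of diameter $\leq\sqrt2\,\alpha^{k+J-1}/c_n$, and sum their lengths to bound $\mathcal{L}^1(B\cap[t_0-h,t_0+h])$; never invoke $\mathcal{H}^1(T_h\cap E)$.

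Second, and more seriously, the claim that a strip of width below $(1-2\alpha)\cdot(\text{parent side})$ meets at most two of the four corner-children fails at $\alpha=1/3$, which the paper permits. Testing the diagonal family $y=x+c$: the level--one children $\mathrm{SW}$ and $\mathrm{NE}$ are hit for $c\in[-\alpha,\alpha]$, while $\mathrm{NW}$ is hit for $c\in[1-2\alpha,1]$; these intervals meet exactly when $\alpha\geq 1/3$, and for a strip of perpendicular width $w$ the critical threshold for hitting all three is $w\geq(1-3\alpha)/\sqrt2$, which degenerates to zero as $\alpha\to 1/3$. So at $\alpha=1/3$ arbitrarily thin diagonal strips meet three children in positive-measure sets, and the clean ``$2^J$ squares at generation $J$'' count does not follow from your stated threshold. (A finer genealogical analysis may show the three-way hits do not cascade multiplicatively, since the strip clips each only near a corner, but that analysis is absent from the proposal and is precisely what would be needed.) You also silently assume the four squares sit at the corners of the parent, which is consistent with the figure but not asserted by the construction; the paper's projection argument relies only on the weaker fact that each coordinate projection of the level--one squares has total length $\leq 2\alpha<1$.
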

\begin{proof}
  Write $\gamma=(\gamma_1,\gamma_2)$ for the two coordinate components of $\gamma$. Let  $P_\ell$ be the projection of $\Gamma\setminus\bigcup_i\Gamma_i$ into the $\ell^\textrm{th}$ coordinate axis, $\ell=1,2$. Note that since $\alpha\leq \frac13$, $P_\ell$ is a Cantor set of measure zero, $|P_\ell|=0$.
  
  Because of Rademacher's theorem and the fact that $\gamma'(t)\neq 0$ for almost every $t\in\R$, the sets $A_1$ and $A_2$ where the derivatives $\gamma_1'(t)$ and $\gamma_2'(t)$, respectively, are well-defined and nonzero, satisfy that $A_1\cup A_2$ is a set of full measure. If $B$ is the null set of real numbers $t\in\R$ such that $\gamma'(t)$ is either not defined or equal to zero, then $A_1\cup A_2\cup B=\R$.
  
  For $i\in\{1,2\}$ and $p\in P_i$, $\gamma_i^{-1}(p)\cap A_i$ is countable because the isolated points in this set are only countably-many, and the non-isolated points $t\in \gamma_i^{-1}(p)\cap A_i$ either satisfy $\gamma_i'(t)=0$ (as can be seen by taking the limit in the definition of the derivative restricting to points in $\gamma_i^{-1}(p)\cap A_i$) or $\gamma_i'(t)$ is not defined; in other words, if $t\in \gamma_i^{-1}(p)\cap A_i$ is not isolated, then $t\in B$.
  
  Thus $\gamma_i^{-1}(P)\cap A_i$ can be written as a countable, disjoint union $\bigsqcup_{j=1}^\infty Q_j^i$ of measurable sets $Q_j^i\subset A_i$ such that $\gamma_i(Q_j^i\cup B)=P_i$ and $\gamma_i$ is injective on $Q^i_j$. %
  Since, by the change of variable formula \cite[p. 99]{evansgariepy},
  \[0\leq\int_{Q_j^i}\|\gamma_i'(t)\|dt=|\gamma_i(Q_j^i)|\leq |P_i|=0,\]
  and since this is only possible if $|Q^i_j|=0$ because the integrand is strictly positive throughout $Q^i_j$,
  all the sets $Q^i_j$ must be Lebesgue null. As a consequence, $\gamma_i^{-1}(P)\cap A_i$ is a countable union of null sets, and it is hence null.
  
  Now, 
   \[
    \gamma^{-1}(\Gamma\setminus\textstyle\bigcup_i\Gamma_i)\displaystyle=\gamma_1^{-1}(P_1)\cap \gamma_2^{-1}(P_2)\subseteq(\gamma_1^{-1}(P_1)\cap A_1)\cup (\gamma_2^{-1}(P_2)\cap A_2)\cup B,
   \]
   and the three sets on the right-hand side are null, so this proves the lemma.
\end{proof}

 \paragraph{Acknowledgements.} 
The author is deeply grateful for the guidance and support of J\'er\^ome Bolte and Edouard Pauwels. The author acknowledges the support of ANR-3IA Artificial and Natural Intelligence Toulouse Institute.

 \bibliography{bib}{}

\begin{thebibliography}{10}

\bibitem{attouchboltesvaiter}
Hedy Attouch, J{\'e}r{\^o}me Bolte, and Benar~Fux Svaiter.
\newblock Convergence of descent methods for semi-algebraic and tame problems:
  proximal algorithms, forward--backward splitting, and regularized
  {G}auss--{S}eidel methods.
\newblock {\em Mathematical Programming}, 137(1-2):91--129, 2013.

\bibitem{bierstone}
Edward Bierstone.
\newblock Differentiable functions.
\newblock {\em Boletim da Sociedade Brasileira de
  Matem{\'a}tica-Bulletin/Brazilian Mathematical Society}, 11(2):139--189,
  1980.

\bibitem{bolte2010characterizations}
J{\'e}r{\^o}me Bolte, Aris Daniilidis, Olivier Ley, and Laurent Mazet.
\newblock Characterizations of {\l{}}ojasiewicz inequalities: subgradient
  flows, talweg, convexity.
\newblock {\em Transactions of the American Mathematical Society},
  362(6):3319--3363, 2010.

\bibitem{boltepauwels}
J{\'e}r{\^o}me Bolte and Edouard Pauwels.
\newblock Conservative set valued fields, automatic differentiation, stochastic
  gradient methods and deep learning.
\newblock {\em Mathematical Programming}, 2020.

\bibitem{ouroscillationcompensation}
J\'er\^ome Bolte, Edouard Pauwels, and Rodolfo R\'ios-Zertuche.
\newblock Long term dynamics of the subgradient method for {L}ipschitz path
  differentiable functions.
\newblock Preprint. arXiv:2006.00098 [math.OC].

\bibitem{borweinetal}
Jonathan Borwein, Warren Moors, and Xianfu Wang.
\newblock Generalized subdifferentials: a {B}aire categorical approach.
\newblock {\em Transactions of the American Mathematical Society},
  353(10):3875--3893, 2001.

\bibitem{daniilidisdrusvyatskiy}
Aris Daniilidis and Dmitriy Drusvyatskiy.
\newblock Pathological subgradient dynamics.
\newblock {\em SIAM Journal on Optimization}, 30(2):1327--1338, 2020.

\bibitem{Davis2019}
Damek Davis, Dmitriy Drusvyatskiy, Sham Kakade, and Jason~D. Lee.
\newblock Stochastic subgradient method converges on tame functions.
\newblock {\em Foundations of Computational Mathematics}, 01 2019.

\bibitem{evansgariepy}
Lawrence~Craig Evans and Ronald~F Gariepy.
\newblock {\em Measure theory and fine properties of functions}.
\newblock CRC Press, 2015.

\bibitem{lang}
Serge Lang.
\newblock {\em Differential and Riemannian manifolds}, volume 160 of {\em
  Graduate Texts in Mathematics}.
\newblock Springer, 2012.

\bibitem{whitney}
Hassler Whitney.
\newblock A function not constant on a connected set of critical points.
\newblock {\em Duke Mathematical Journal}, 1(4):514--517, 1935.

\end{thebibliography}
 \bibliographystyle{plain}

\end{document}